\newtheorem{thm}{Theorem}[section]
\newtheorem{prop}[thm]{Proposition}
\newtheorem{lem}[thm]{Lemma}
\newtheorem{cor}[thm]{Corollary}
\theoremstyle{definition}
\theoremstyle{remark}
\newtheorem{rem}[thm]{Remark}
\numberwithin{equation}{section}
\DeclareMathOperator{\soc}{soc}
\DeclareMathOperator{\tp}{top}
\DeclareMathOperator{\dimv}{\underline{dim}}
\DeclareMathOperator{\Irr}{Irr}
\DeclareMathOperator{\nil}{nil}
\DeclareMathOperator{\GL}{GL}
\DeclareMathOperator{\id}{id}
\DeclareMathOperator{\op}{op}
\newcommand\bfi{\mathbf{i}}
\newcommand\bfa{\mathbf{a}}
\begin{document}

\title[Parametrizations of canonical bases and nilpotent varieties]{Parametrizations of canonical bases and irreducible components of nilpotent varieties}

\author{Yong Jiang}

\address{Fakult\"{a}t f\"{u}r Mathematik, Universit\"{a}t Bielefeld, Postfach 10 01 31, D-33501 Bielefeld, Germany}

\email{yjiang@math.uni-bielefeld.de}

\thanks{Supported by the Sonderforschungsbereich 701 "Spectral Structures and Topological Methods in Mathematics" in Bielefeld University.}





\begin{abstract}
It is known that the set of irreducible components of nilpotent varieties provides a geometric realization of the crystal basis for quantum groups. For each reduced expression of a Weyl group element, Gei{\ss}, Leclerc and Schr\"{o}er has recently given a parametrization of irreducible components of nilpotent varieties in studying cluster algebras. In this paper we show that their parametrization coincides with Lusztig's parametrization of the canonical basis.
\end{abstract}

\maketitle

\section{Introduction}\label{sec 1}
Let $\mathfrak{g}$ be a Kac-Moody algebra associated with a symmetric Cartan matrix and $U_{q}(\mathfrak{g})$ be its quantized enveloping algebra. Denote by $\mathscr{B}(\infty)$ the crystal basis \cite{Ka1} of the negative part $U_{q}^{-}(\mathfrak{g})$. The globalization of the crystal basis coincides with Lusztig's canonical basis \cite{Lu1}\cite{Lu2}.

If $\mathfrak{g}$ is finite-dimensional, the canonical basis of $U_{q}^{-}(\mathfrak{g})$ has a labeling of $r$-tuple of non-negative integers through the PBW-basis \cite{Lu1}, where $r$ is the length of the longest element $w_{0}$ in the Weyl group. More precisely, for each reduced expression $\bfi$ of $w_{0}$, we have a bijection $\psi_{\bfi}:\mathbb{N}^{r}\simeq\mathscr{B}(\infty)$, known as Lusztig's parametrization. It was later generalized to the case of Kac-Moody algebras in the following sense \cite{Lu4}: For each $w$ of length $r$ and a reduced expression $\bfi$, we have an injective map $\psi_{\bfi}:\mathbb{N}^{r}\hookrightarrow\mathscr{B}(\infty)$. The image does not depend on the choice of $\bfi$ and thus can be denoted by $\mathscr{B}(w)$ (see \cite{Ki}). There are many interesting applications of Lusztig's parametrizations of canonical bases, e.g. criteria of total positivity \cite{BFZ} and combinatorial expressions of tensor product multiplicities \cite{BZ}.

On the other hand, Kashiwara and Saito \cite{KS} gave a geometric construction of the crystal $\mathscr{B}(\infty)$ using Lusztig's nilpotent varieties, which are varieties of certain modules over preprojective algebras. The set $\mathcal{B}$ of irreducible components of nilpotent varieties has a crystal structure isomorphic to $\mathscr{B}(\infty)$. Recently Gei{\ss}, Leclerc and Schr\"{o}er has shown in \cite{GLS2} that for any $w$ in the Weyl group, a subset (will be denoted by $\mathcal{B}(w)$ in \ref{subsec 3 GLS parametrization}) of $\mathcal{B}$ provides the dual semicanonical basis \cite{Lu5} of the coordinate ring $\mathbb{C}[N(w)]$ of the corresponding unipotent subgroup. For each reduced expression $\bfi$ of $w$, elements in $\mathcal{B}(w)$ are Zariski closures of certain irreducible constructible subsets $\Lambda_{\bfi}^{\bfa}$ ($\bfa\in\mathbb{N}^{r}$). And $\Lambda_{\bfi}^{\bfa}$ is the set of $\Lambda$-modules filtered by certain modules $M_{\bfi,k}$ ($1\leq k\leq r$) with multiplicities $\bfa$ (see \ref{subsec 3 GLS parametrization}). In this way we have an $\mathbb{N}^{r}$-parametrization of $\mathscr{B}(\infty)$. In other words, we have an injective map $\phi_{\bfi}:\mathbb{N}^{r}\hookrightarrow\mathcal{B}$ and the image $\mathcal{B}(w)$ only depends on $w$.

Thus it is quite natural to study the relationship between Lusztig's and GLS's parametrizations. Moreover, we may ask if the subset $\mathcal{B}(w)$ is the geometric counterpart of $\mathscr{B}(w)$, i. e. the image of $\mathscr{B}(w)$ under the crystal isomorphism $\mathscr{B}(\infty)\simeq\mathcal{B}$ is precisely $\mathcal{B}(w)$.

In this paper we give an affirmative answer to the above question by showing that the two parametrizations actually coincide, if we identify $\mathscr{B}(\infty)$ with its geometric counterpart $\mathcal{B}$ (Theorem \ref{thm main}). This result also gives an explicit description of the isomorphism $\mathscr{B}(\infty)\simeq\mathcal{B}$ restricting to the subset $\mathscr{B}(w)$ (see Remark \ref{rem importance of thm 2}).

If $\mathfrak{g}$ is finite-dimensional and $\bfi$ is a $Q$-adapted reduced expression of the longest element $w_{0}$, our result has been proved by Baumann and Kamnitzer \cite{BK}. However, our theorem works for any reduced expression $\bfi$ of $w\in W$ and is new even in the finite type case (see Remark \ref{rem importance of thm 1}).

In the proof of our main result we use reflection functors for preprojective algebras introduced in \cite{BK}. We prove that all the modules $M_{\bfi,k}$, whose multiplicities yield GLS's parametrization, can be obtained from simple $\Lambda$-modules by applying reflection functors (Proposition \ref{prop M_k and replection funtctor}). This result was essentially proved in \cite{AIRT}, where the modules $M_{\bfi,k}$ were introduced differently in a dual form and some derived reflection functors were used. The relationship between the two classes of functors has been discussed in \cite{BKT}, which appeared in arXiv shortly after our paper. However, we present a new and direct proof. And our approach shows that another family of modules $V_{\bfi,k}$ appearing in \cite{GLS2} is also closely related to reflection functors.

After a preliminary version of this paper had been written, we were informed by Jan Schr\"{o}er that Bolten \cite{Bo} also introduced reflection functors for preprojective algebras, very similar to Baumann and Kamnitzer's, and proved the fact that the modules $M_{\bfi,k}$ can be obtained via reflection functors.

The paper is organized as follows: In Section \ref{sec 2} and Section \ref{sec 3} we provide preliminaries on quantum groups, nilpotent varieties as well as the two parametrizations of crystals. We then prove the fact that the modules $M_{\bfi,k}$ can be obtained from simple modules by applying reflection functors in Section \ref{sec 4}. Finally, in Section \ref{sec 5} we prove the main result of this paper, namely the coincidence of Lusztig's parametrization of the canonical basis and GLS's parametrization of irreducible components of nilpotent varieties.

\section{Quantum groups and parametrizations of canonical bases}\label{sec 2}

\subsection{Basic notions}\label{subsec 2 basic notions}
Let $\Gamma$ be a finite graph without loops and $I=\{1,2,\ldots,n\}$ be the set of vertices. Let $\mathfrak{g}$ be the (symmetric) Kac-Moody algebra associated with $\Gamma$. Denote by $\alpha_{i}$ (resp. $\varpi_{i}$) the simple roots (resp. fundamental weights). Let $P$ be the weight lattice. Let $W$ be the corresponding Weyl group generated by simple reflections $s_{i}$ ($i\in I$). The length of an element $w\in W$ is denoted by $\ell(w)$. If $w=s_{i_{r}}s_{i_{r-1}}\cdots s_{i_{1}}$ is a reduced expression we say $\bfi=(i_{r},i_{r-1}\ldots,i_{1})$ is a reduced expression of $w$.

Let $U_{q}(\mathfrak{g})$ be the quantized enveloping algebra of $\mathfrak{g}$, which is a $\mathbb{Q}(q)$-algebra with generators $e_{i}$, $f_{i}$ ($i\in I$) and $q^{h}$ ($h\in P^{\ast}$). Let $U_{q}^{-}(\mathfrak{g})$ be the subalgebra of $U_{q}(\mathfrak{g})$ generated by $f_{i}$ ($i\in I$). Note that the defining relations of $U_{q}^{-}(\mathfrak{g})$ are the quantum Serre relations.

\subsection{Quantum unipotent subgroups}\label{subsec 2 quantum unipotent subgroups}
For each $i\in I$, Lusztig defined $\mathbb{Q}(q)$-algebra automorphisms $T_{i,e}',T_{i,e}''$ ($e\in\{\pm 1\}$) of $U_{q}(\mathfrak{g})$ (see \cite{Lu3} for details). We will write $T_{i}$ (resp. $T_{i}^{-1}$) for $T_{i,1}''$ (resp. $T_{i,-1}'$).

For any $w\in W$ and a reduced expression $\bfi=(i_{r},i_{r-1},\ldots,i_{1})$, set $\beta_{\bfi,k}=s_{i_{1}}\cdots s_{i_{k-1}}(\alpha_{i_{k}})$, for any $1\leq k\leq r$.

We then define the following quantum root vectors:
$$F(\beta_{\bfi,k})=T_{i_{1}}^{-1}\cdots T_{i_{k-1}}^{-1}(f_{i_{k}}), \quad \text{for all} \ 1\leq k\leq r.$$

Note that these elements are in $U_{q}^{-}(\mathfrak{g})$. For $\bfa=(a_{1},\ldots,a_{r})\in\mathbb{N}^{r}$ set
$$F_{\bfi}(\bfa)=F(\beta_{\bfi,r})^{(a_{r})}\cdots F(\beta_{\bfi,1})^{(a_{1})}.$$

The subspace of $U_{q}^{-}(\mathfrak{g})$ spanned by $\{F_{\bfi}(\bfa)|\bfa\in\mathbb{N}^{r}\}$ is a subalgebra of $U_{q}^{-}(\mathfrak{g})$ and independent of the choice of $\bfi$ (see \cite{Lu3}). Thus we can denote it by $U_{q}^{-}(w)$, called the \textit{quantum unipotent subgroup} associated with $w$ (see \cite{Ki}).

Moreover, the set $\mathcal{P}_{\bfi}=\{F_{\bfi}(\bfa)|\bfa\in\mathbb{N}^{r}\}$ is a basis of $U_{q}^{-}(w)$, called the PBW-basis attached to $\bfi$. In particular, if $\mathfrak{g}$ is finite dimensional and $w=w_{0}$ the longest element in the Weyl group, we have $U_{q}^{-}(w_{0})=U_{q}^{-}(\mathfrak{g})$ and $\mathcal{P}_{\bfi}$ is a basis of $U_{q}^{-}(\mathfrak{g})$.

\subsection{Lusztig's parametrization of the canonical basis}\label{subsec 2 parametrization of crystal}
Let $\mathscr{L}(\infty)$ (resp. $\mathscr{B}(\infty)$ be the crystal lattice (resp. crystal basis) of $U_{q}^{-}(\mathfrak{g})$. We refer to \cite{Ka1} for all missing definitions.

\begin{thm}
Let $w\in W$ and $\bfi$ be a reduced expression of $w$.

(i). For any $\bfa=(a_{1},\ldots,a_{r})\in\mathbb{N}^{r}$, there exists a unique element $b_{\bfi,\bfa}\in\mathscr{B}(\infty)$ such that
$$F_{\bfi}(\bfa)\equiv b_{\bfi,\bfa} \mod\ q\mathscr{L}(\infty). $$

(ii). The map $\psi_{\bfi}:\mathbb{N}^{r}\rightarrow\mathscr{B}(\infty)$ defined by $\bfa\mapsto b_{\bfi,\bfa}$ is injective.

(iii). The image of $\psi_{\bfi}$ does not depend on the choice of $\bfi$.
\end{thm}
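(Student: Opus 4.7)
The plan is to prove (i) by induction on $r = \ell(w)$, derive (ii) as an immediate consequence, and handle (iii) by comparing reduced expressions of $w$ through braid moves. The decisive recursion is the factorisation
\begin{equation*}
F_\bfi(a_1,a_2,\ldots,a_r) \;=\; T_{i_1}^{-1}\bigl(F_{\bfi''}(a_2,\ldots,a_r)\bigr)\cdot f_{i_1}^{(a_1)},
\end{equation*}
where $\bfi'' = (i_r,\ldots,i_2)$ is a reduced expression of $w s_{i_1}$ of length $r-1$.

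For the base case $r=1$ we have $F_\bfi((a)) = f_{i_1}^{(a)}$, and Kashiwara's basic congruence $f_i^{(a)}\equiv \tilde f_i^{\,a}\mathbf 1 \pmod{q\mathscr L(\infty)}$ provides $b_{\bfi,(a)} = \tilde f_{i_1}^{\,a}\mathbf 1 \in \mathscr B(\infty)$; uniqueness is automatic because $\mathscr B(\infty)$ is a $\mathbb Z$-basis of $\mathscr L(\infty)/q\mathscr L(\infty)$. For the inductive step, one obtains by hypothesis a unique $b'\in \mathscr B(\infty)$ with $F_{\bfi''}(a_2,\ldots,a_r)\equiv b'\pmod{q\mathscr L(\infty)}$, and the normal-ordered form of the PBW product guarantees $\epsilon_{i_1}^{*}(b')=0$ (the letter $i_1$ never occurs as the rightmost factor in the expansion of $F_{\bfi''}(\bfa'')$). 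Saito's reflection theorem then supplies a bijection
\begin{equation*}
\sigma_{i_1}:\; \{b\in \mathscr B(\infty) : \epsilon_{i_1}^{*}(b)=0\} \;\longrightarrow\; \{b\in \mathscr B(\infty) : \epsilon_{i_1}(b)=0\},
\end{equation*}
compatible with $T_{i_1}^{-1}$ modulo $q\mathscr L(\infty)$, giving $T_{i_1}^{-1}(F_{\bfi''}(\bfa''))\equiv \sigma_{i_1}(b')\pmod{q\mathscr L(\infty)}$. Since $\epsilon_{i_1}(\sigma_{i_1}(b'))=0$, left multiplication by $f_{i_1}^{(a_1)}$ realises $\tilde f_{i_1}^{\,a_1}$ modulo $q$, and we may set $b_{\bfi,\bfa}:=\tilde f_{i_1}^{\,a_1}\sigma_{i_1}(b')\in \mathscr B(\infty)$, completing (i).

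Part (ii) follows at once: the $F_\bfi(\bfa)$ form a $\mathbb Q(q)$-basis of $U_q^{-}(w)$, so if $\psi_\bfi(\bfa)=\psi_\bfi(\bfa')$ for distinct $\bfa,\bfa'$, the difference $F_\bfi(\bfa)-F_\bfi(\bfa')$ would lie in $q\mathscr L(\infty)$, contradicting linear independence after passing to the associated graded. For (iii), any two reduced expressions of $w$ are connected by a finite sequence of braid moves; it therefore suffices to verify invariance of the image under a single such move, and this reduces to the rank-two case, where a direct computation using Lusztig's braid identities shows that the two PBW bases produce the same set of crystal basis elements modulo $q\mathscr L(\infty)$.

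The principal obstacle lies in the inductive step of (i), namely the crystal-level compatibility of $T_{i_1}^{-1}$ with Saito's reflection $\sigma_{i_1}$: one must verify that $T_{i_1}^{-1}$ preserves $\mathscr L(\infty)$ on the subspace corresponding to $\epsilon_{i_1}^{*}=0$ and agrees with $\sigma_{i_1}$ modulo $q$. This is the technical heart of the argument and rests on a careful analysis of how Lusztig's braid automorphisms interact with the Kashiwara operators $\tilde e_i,\tilde f_i$ and their starred variants.
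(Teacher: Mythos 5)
Your recursion $F_\bfi(\bfa)=T_{i_1}^{-1}\bigl(F_{\bfi''}(a_2,\ldots,a_r)\bigr)f_{i_1}^{(a_1)}$ is the right starting point (it is exactly the mechanism the paper runs in reverse in Proposition \ref{prop determine Lusztig data}; note the paper does not reprove the theorem but cites \cite{Lu4} for (i)--(ii) and \cite{Ki} for (iii)). However, your inductive step has the starred and unstarred structures interchanged, and its key claim is false. Since $T_{i_1}^{-1}(F_{\bfi''}(\bfa''))\in U_q^-(\mathfrak g)$, the element $F_{\bfi''}(\bfa'')$ lies in $U_q^{-}(\mathfrak g)\cap T_{i_1}(U_q^{-}(\mathfrak g))$, and the crystal-side condition this corresponds to is $\varepsilon_{i_1}(b')=0$, \emph{not} $\varepsilon_{i_1}^{\ast}(b')=0$; the parenthetical justification (``$i_1$ never occurs as the rightmost factor'') is not an argument, since expanding the quantum root vectors in the $f_i$ produces monomials ending in $f_{i_1}$. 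Concretely, in type $A_2$ take $w=s_1s_2s_1$, $\bfi=(1,2,1)$ (so $i_1=1$, $\bfi''=(1,2)$) and $\bfa''=(0,1)$: then $F_{\bfi''}(\bfa'')=T_2^{-1}(f_1)\equiv\widetilde{f}_2\widetilde{f}_1 b_0 \bmod q\mathscr L(\infty)$ (this is forced by (i) applied to $\bfi''$, since $\varepsilon_2^{\ast}(b')$ must equal the exponent $a_2=0$ of $f_2$), and $\varepsilon_1^{\ast}(\widetilde{f}_2\widetilde{f}_1 b_0)=1$ while $\varepsilon_1(\widetilde{f}_2\widetilde{f}_1 b_0)=0$. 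Correspondingly, Saito's reflection compatible with $T_{i_1}^{-1}$ is $\mathcal T_{i_1}^{-1}$, defined on $\{b:\varepsilon_{i_1}(b)=0\}$ with image in $\{b:\varepsilon_{i_1}^{\ast}(b)=0\}$ --- the opposite of the bijection you state --- and since $f_{i_1}^{(a_1)}$ multiplies on the \emph{right} in your own factorization, what it realizes on an element with trivial $\varepsilon_{i_1}^{\ast}$ is $\widetilde{f}_{i_1}^{\ast a_1}$ (Saito, Prop.~2.1.2), not ``left multiplication realises $\widetilde{f}_{i_1}^{a_1}$''. The correct outcome of the induction is $b_{\bfi,\bfa}=\widetilde{f}_{i_1}^{\ast a_1}\mathcal T_{i_1}^{-1}(b')$. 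Finally, the lattice compatibility of $T_{i_1}^{\pm1}$ with $\mathcal T_{i_1}^{\pm1}$, which you yourself call the technical heart, is only asserted; citing \cite{Sai} for it is legitimate, but then it must be quoted with the correct hypotheses.

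Parts (ii) and (iii) also have gaps. For (ii), $\mathbb Q(q)$-linear independence of the $F_\bfi(\bfa)$ does not prevent a difference from lying in $q\mathscr L(\infty)$ (compare $x$ and $x+qy$); what is needed is independence of the images in $\mathscr L(\infty)/q\mathscr L(\infty)$, which is essentially the statement to be proved. The honest route is the one a corrected (i) yields for free: $a_1=\varepsilon_{i_1}^{\ast}(b_{\bfi,\bfa})$ and $\mathcal T_{i_1}\widetilde{e}_{i_1}^{\ast\max}(b_{\bfi,\bfa})=b_{\bfi'',\bfa''}$, so $\bfa$ is recovered from $b_{\bfi,\bfa}$ by induction --- this is precisely Proposition \ref{prop determine Lusztig data}. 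For (iii), reducing to braid moves is the classical finite-type strategy, but ``a direct computation \dots shows'' is exactly where the content sits: one must transport the rank-two congruences through the ambient $T^{-1}$'s at the level of $\mathscr L(\infty)$, i.e.\ the same compatibility left unproved above; the paper instead obtains (iii) from Kimura's theorem \cite{Ki} that $U_q^-(w)$ is compatible with the dual canonical basis.
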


In the above theorem, (i) and (ii) were proved in \cite{Lu4} (We use the crystal basis instead of the canonical basis just for convenience). (iii) can be seen from a recent result in \cite{Ki} asserting that the quantum unipotent subgroup $U_{q}^{-}(w)$ is compatible with the dual canonical basis.

Thus we can denote by $\mathscr{B}(w)$ the image of $\psi_{\bfi}$. The theorem gives an $\mathbb{N}^{r}$-parametrization of elements in $\mathscr{B}(w)$. For each $b\in\mathscr{B}(w)$, $\psi_{\bfi}^{-1}(b)$ is called the \textit{$\bfi$-Lusztig data} of $b$.

Note that when $\mathfrak{g}$ is finite-dimensional and $w=w_{0}$, the map $\bfa\mapsto\ b_{\bfi,\bfa}$ is a bijection and $\mathscr{B}(w)=\mathscr{B}(\infty)$.

\section{Preprojective algebras, nilpotent varieties and reflection functors}\label{sec 3}

\subsection{Preprojective algebras and nilpotent varieties}\label{subsec 3 preprojective algebras}
Let $\Lambda$ be the preprojective algebra associated to the graph $\Gamma$ (see for example \cite{R}). Denote by $\bmod\Lambda$ the category of finite dimensional $\Lambda$-modules. For each $i\in I$ we have a one-dimensional simple $\Lambda$-module $S_{i}$ concentrated on the vertex $i$. In general there exist other finite-dimensional simple modules. A $\Lambda$-module $M$ is called \textit{nilpotent} if there is a composition series of $M$ with all factors of the form $S_{i}$ ($i\in I$). Let $\nil(\Lambda)$ be the full subcategory of $\bmod(\Lambda)$ consisting of finite-dimensional nilpotent $\Lambda$-modules.

We will identify the dimension vector of a $\Lambda$-module $M$ as an element in the root lattice by setting $\dimv S_{i}=\alpha_{i}$. For a $\Lambda$-module $M$, the \textit{$i$-socle} (resp. \textit{$i$-top}) of $M$ is the $S_{i}$-isotypic component of the socle (resp. top) of $M$, denoted by $\soc_{i}M$ (resp. $\tp_{i}M$).

For a dimension vector $\nu=\sum_{i\in I}\nu_{i}\alpha_{i}\in Q_{+}$, let $\Lambda(\nu)$ be the variety of finite-dimensional nilpotent $\Lambda$-modules with dimension vector $\nu$. It is an affine algebraic variety. Recall that the algebraic group $\GL(\nu)=\prod_{i\in I}\GL_{\nu_{i}}(\mathbb{C})$ acts on $\Lambda(\nu)$ such that two points in $\Lambda(\nu)$ are in the same orbit if and only if they are isomorphic as $\Lambda$-modules. These varieties were first studied by Lusztig (see \cite{Lu1}) and usually called \textit{nilpotent varieties}.

Denote by $\Irr\Lambda(\nu)$ the set of irreducible components of the variety $\Lambda(\nu)$. Let $\mathcal{B}=\bigsqcup_{\nu\in Q_{+}}\Irr\Lambda(\nu)$. Kashiwara and Saito associated a crystal structure on $\mathcal{B}$ and proved that there is a (unique) crystal isomorphism $\Psi:\mathscr{B}(\infty)\rightarrow\mathcal{B}$ (see \cite{KS} for details).

\subsection{GLS's parametrization of irreducible components}\label{subsec 3 GLS parametrization}
For each $i\in I$, let $\widehat{I}_{i}$ be the indecomposable injective $\Lambda$-module with socle $S_{i}$. Note that these modules are infinite-dimensional if $\Gamma$ is not of type $ADE$.

For a sequence $(j_{1},\ldots,j_{t})$ of indices with $1\leq j_{p}\leq n$ for all $p$, there is a unique chain
$$0=X_{0}\subseteq X_{1}\subseteq \cdots \subseteq X_{t}\subseteq X$$
of submodules of $X$ such that $X_{p}/X_{p-1}=\soc_{j_{p}}(X/X_{p-1})$. Define $\soc_{(j_{1},\ldots,j_{t})}(X):=X_{t}$.

Let $\bfi=(i_{r},\ldots,i_{1})$ be a reduced expression of $w\in W$. For $1\leq k\leq r$, set
$$V_{\bfi,k}=\soc_{(i_{k},\ldots,i_{1})}(\widehat{I}_{i_{k}}).$$

For $1\leq k\leq r$, let $k^{-}=\max\{0,1\leq s\leq k-1|i_{s}=i_{k}\}$. Then for each $1\leq k\leq r$ there is a canonical embedding $\iota_{k}:V_{\bfi,k^{-}}\rightarrow V_{\bfi,k}$, where for $k^{-}=0$ we set $V_{\bfi,0}=0$.

Let $M_{\bfi,k}$ be the cokernel of $\iota_{k}$. For $\bfa=(a_{r},\ldots,a_{1})\in\mathbb{N}^{r}$, let $\Lambda_{\bfi}^{\bfa}$ be the set of all $\Lambda$-modules $X$ such that there exists a chain
$$0=X_{0}\subseteq X_{1}\subseteq \cdots\subseteq X_{r}=X$$
such that $X_{k}/X_{k-1}\cong M_{\bfi,k}^{a_{k}}$ for all $1\leq k\leq r$.

It is clear that all the modules in $\Lambda_{\bfi}^{\bfa}$ have the same dimension vector $\mu(\bfa)=\sum_{k=1}^{r}a_{k}\dimv M_{\bfi,k}$. Hence $\Lambda_{\bfi}^{\bfa}$ is a subset of the variety $\Lambda(\mu(\bfa))$.

\begin{prop}[\cite{GLS2}]\label{prop parametrization of irr cpt}
$\Lambda_{\bfi}^{\bfa}$ is an irreducible constructible subset of $\Lambda(\mu(\bfa))$ and the Zariski closure $Z_{\bfi}^{\bfa}$ of $\Lambda_{\bfi}^{\bfa}$ is an irreducible component. In particular, $Z_{\bfi}^{\bfa}$ is the unique irreducible component of $\Lambda(\mu(\bfa))$ which contains a dense open subset belonging to $\Lambda_{\bfi}^{\bfa}$.
\end{prop}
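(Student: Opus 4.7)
The plan is to realize $\Lambda_{\bfi}^{\bfa}$ as the image of an irreducible filtration variety under a forgetful projection, and then invoke the known equidimensionality of Lusztig's nilpotent varieties to upgrade irreducibility of the closure to the statement that it is a whole component.

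First, I would introduce the filtration variety
\[
F_{\bfi}^{\bfa} \;=\; \bigl\{(X,\, X_0\subseteq X_1\subseteq\cdots\subseteq X_r=X) \,:\, X_k/X_{k-1}\cong M_{\bfi,k}^{a_k}\bigr\},
\]
realized as a locally closed subvariety of the product of $\Lambda(\mu(\bfa))$ with a suitable partial flag variety of vector-space flags in $\mathbb{C}^{\mu(\bfa)}$. By construction, $\Lambda_{\bfi}^{\bfa}$ is exactly the image of the forgetful projection $\pi\colon F_{\bfi}^{\bfa} \to \Lambda(\mu(\bfa))$. By Chevalley's theorem this image is constructible, and once $F_{\bfi}^{\bfa}$ is shown to be irreducible, so are $\Lambda_{\bfi}^{\bfa}$ and its closure $Z_{\bfi}^{\bfa}$.

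To prove irreducibility of $F_{\bfi}^{\bfa}$ I would induct on $r$. Forgetting the top piece $X_r/X_{r-1}$ gives a morphism from $F_{\bfi}^{\bfa}$ to a variety $F_{\bfi,r-1}^{\bfa}$ of shorter filtrations. Its fiber over a fixed $X_{r-1}$ is a $\GL$-bundle over the set of extension classes in $\Ext^{1}_{\Lambda}(M_{\bfi,r}^{a_r}, X_{r-1})$. If the dimension of this $\Ext$ space is constant on a dense open subset of the base, upper semicontinuity shows that the total space is, generically, a fibration over an irreducible base with irreducible fibers, hence irreducible. To show that $Z_{\bfi}^{\bfa}$ is an irreducible component, I would invoke the result of Lusztig, refined by Kashiwara--Saito and Crawley-Boevey, that $\Lambda(\nu)$ is pure of dimension $d(\nu) = \dim \rep(Q,\nu)$ for any orientation $Q$ of $\Gamma$. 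The dimension of $Z_{\bfi}^{\bfa}$ can then be read off from the iterated fibration: it equals $\dim F_{\bfi}^{\bfa}$ minus the generic fiber dimension of $\pi$, and a direct Euler-form computation on the sequence $(M_{\bfi,k})$ should reduce this to $d(\mu(\bfa))$. The uniqueness of such a component follows because any other irreducible component of $\Lambda(\mu(\bfa))$ meeting $\Lambda_{\bfi}^{\bfa}$ in a dense open subset would coincide with $Z_{\bfi}^{\bfa}$.

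The hard part will be the $\Ext$- and $\Hom$-dimension calculations needed both in the irreducibility step and in the dimension count. They require a precise understanding of how the modules $M_{\bfi,k}$ and $V_{\bfi,k}$ interact homologically; morally, for a reduced word $\bfi$ the collection $M_{\bfi,1},\ldots,M_{\bfi,r}$ should behave like an exceptional sequence in the subcategory attached to $w$, making $\Ext^1_{\Lambda}(M_{\bfi,k}, M_{\bfi,k'})$ vanish for $k<k'$ and forcing the generic module in $\Lambda_{\bfi}^{\bfa}$ to be rigid enough that the dimension matches $d(\mu(\bfa))$. Establishing this homological behavior directly from the definition of $M_{\bfi,k}$ as the cokernel of $V_{\bfi,k^{-}}\hookrightarrow V_{\bfi,k}$ is the main technical obstacle; a natural route is to use reflection functors (as discussed in the introduction) to reduce the required $\Ext$- and $\Hom$-computations to the case of simple $\Lambda$-modules, where they become transparent.
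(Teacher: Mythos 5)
You should note at the outset that the paper you are writing into does not prove this proposition at all: it is quoted verbatim from \cite{GLS2}, so the only possible comparison is with the argument there. Your skeleton --- realize $\Lambda_{\bfi}^{\bfa}$ as the image of a filtration variety (constructible by Chevalley, irreducible by induction on the layers), then use purity of $\Lambda(\nu)$ (all components Lagrangian, of dimension $d(\nu)=\dim\rep(Q,\nu)$) to reduce ``the closure is a component'' to the dimension count $\dim\Lambda_{\bfi}^{\bfa}=d(\mu(\bfa))$ --- is a legitimate outline. The irreducibility step, by the way, does not need any $\Ext$-constancy or vanishing: irreducibility of iterated extension sets of irreducible constructible, $\GL(\nu)$-stable classes is a standard lemma (your ``$\GL$-bundle over $\Ext^1$'' description of the fibers is not literally correct, but this is a fixable roughness).

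The genuine gap is in the input you propose for the dimension count. Over a preprojective algebra, Crawley-Boevey's formula $\dim\Ext^1_\Lambda(X,Y)=\dim\Hom_\Lambda(X,Y)+\dim\Hom_\Lambda(Y,X)-(\dimv X,\dimv Y)$ is symmetric in $X$ and $Y$, so $\dim\Ext^1_\Lambda(X,Y)=\dim\Ext^1_\Lambda(Y,X)$ for all finite-dimensional modules; a one-directional ``exceptional sequence'' vanishing $\Ext^1_\Lambda(M_{\bfi,k},M_{\bfi,k'})=0$ for $k<k'$ is therefore impossible unless the $M_{\bfi,k}$ are pairwise $\Ext$-orthogonal, and that fails already in type $A_2$ with $\bfi=(1,2,1)$: there $M_{\bfi,1}=S_1$, $M_{\bfi,3}=S_2$ and $\Ext^1_\Lambda(S_1,S_2)\cong\Ext^1_\Lambda(S_2,S_1)\cong\mathbb{C}$. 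Moreover this nonvanishing is essential to the statement you are proving: in that example $Z_{\bfi}^{(1,0,1)}$ is precisely the closure of the set of nonsplit extensions of $M_{\bfi,3}$ by $M_{\bfi,1}$, while if all the relevant $\Ext^1$'s vanished, $\Lambda_{\bfi}^{\bfa}$ would be a single orbit, generically of dimension strictly smaller than $d(\mu(\bfa))$ (recall $d(\nu)-\dim\mathcal{O}_X=\tfrac12\dim\Ext^1_\Lambda(X,X)$). So the heuristic that was supposed to carry the Euler-form computation is structurally wrong over $\Lambda$, and reflection functors cannot rescue it, since the asymmetry you want does not exist. In \cite{GLS2} the actual content is supplied by different means --- the structure theory of the category $\mathcal{C}_w$, in particular the rigidity of $V_{\bfi}=\oplus_k V_{\bfi,k}$ and the homological interplay between the $V_{\bfi,k}$ and $M_{\bfi,k}$, combined with the symmetric formula above --- not by $\Ext$-orthogonality of the $M_{\bfi,k}$. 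As it stands, your proposal does not yield the crucial equality $\dim\Lambda_{\bfi}^{\bfa}=d(\mu(\bfa))$.
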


Thus we have an injective map $\phi_{\bfi}:\mathbb{N}^{r}\rightarrow\mathcal{B}$ given by $\bfa\mapsto Z_{\bfi}^{\bfa}$. And from \cite{GLS2} we can see that the image of the map $\phi_{\bfi}$ is independent of the choice of $\bfi$, since it provides the dual semicanonical basis of the coordinate ring of the unipotent subgroup $N(w)$. So we can denote the image of $\phi_{\bfi}$ by $\mathcal{B}(w)$.

\subsection{Reflection functors for preprojective algebras}\label{subsec 3 ref funct for preproj alg}
For $i\in I$, the reflection functor $\Sigma_{i}$ ($\Sigma^{\ast}_{i}$) for the preprojective algebra is a natural generalization of the BGP-reflection functor for a corresponding quiver with respect to a sink (resp. source) vertex. For precise definitions we refer to \cite{BK}.

We collect some basic properties of reflection functors in the following lemma.
\begin{lem}[\cite{BK}]\label{lem prop of ref funt}
(i). $\Sigma_{i}$ is left exact and $\Sigma_{i}^{\ast}$ is right exact.

(ii). We have the following functorial short exact sequences:
\begin{gather*}
0\rightarrow\soc_{i}\rightarrow \id\rightarrow\Sigma_{i}\Sigma_{i}^{\ast}\rightarrow 0,\\
0\rightarrow\Sigma_{i}^{\ast}\Sigma_{i}\rightarrow \id\rightarrow\tp_{i}\rightarrow 0.
\end{gather*}

(iii). Let $i$ and $j$ be two vertices of $Q$ such that they are linked by one single arrow. Then the functors $\Sigma_{i}\Sigma_{j}\Sigma_{i}$ and $\Sigma_{j}\Sigma_{i}\Sigma_{j}$ are isomorphic.

(iv). If $\tp_{i}M=0$ (resp. $\soc_{i}M=0$), then $\dimv \Sigma_{i}M=s_{i}(\dimv M)$ (resp. $\dimv \Sigma^{\ast}_{i}M=s_{i}(\dimv M)$).
\end{lem}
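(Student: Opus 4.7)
The plan is to verify the four parts by direct computation from the explicit construction of the reflection functors $\Sigma_{i}$ and $\Sigma_{i}^{\ast}$ given in \cite{BK}. Recall that these functors modify a nilpotent $\Lambda$-module $M$ only at the vertex $i$, replacing $M_{i}$ by a kernel (for $\Sigma_{i}$) or cokernel (for $\Sigma_{i}^{\ast}$) of canonical maps assembled from the arrows incident at $i$ together with the preprojective relation at $i$; the vector spaces and structure maps away from $i$ are essentially unchanged.

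For (i), left exactness of $\Sigma_{i}$ is inherited from the fact that the kernel functor is left exact on vector space maps, and dually $\Sigma_{i}^{\ast}$ is right exact because cokernels are right exact. The only mild check is that the modified structure at vertex $i$ respects the preprojective relation, which is built into the definition. For (ii), the natural transformations $\id\to\Sigma_{i}\Sigma_{i}^{\ast}$ and $\Sigma_{i}^{\ast}\Sigma_{i}\to\id$ arise as unit and counit of an adjunction $(\Sigma_{i}^{\ast},\Sigma_{i})$. I would write them out explicitly: away from $i$ they reduce to the identity, and at vertex $i$ the resulting maps have kernel and cokernel that one identifies, via a short diagram chase involving the mesh relation, with $\soc_{i}M|_{i}$ and $\tp_{i}M|_{i}$ respectively.

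For (iv), when $\tp_{i}M=0$, the linear map encoding the arrows out of $i$ is injective (equivalently, in the dual convention, the incoming map is surjective), so the preprojective relation at $i$ forces $(\dimv\Sigma_{i}M)_{i}$ to be the difference between the sum of dimensions at neighbors of $i$ and $(\dimv M)_{i}$. This is exactly the effect of $s_{i}$ on the $i$-coordinate of a dimension vector, while the other coordinates remain unchanged. The dual statement for $\Sigma_{i}^{\ast}$ with $\soc_{i}M=0$ is identical.

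The main obstacle is (iii), the braid relation $\Sigma_{i}\Sigma_{j}\Sigma_{i}\cong\Sigma_{j}\Sigma_{i}\Sigma_{j}$ when $i$ and $j$ are joined by a single arrow. Since both compositions leave the vector spaces at vertices other than $i,j$ untouched, the problem localizes to the rank-two subquiver on $\{i,j\}$. There one must construct an explicit natural isomorphism by tracking the iterated kernels and cokernels through six successive reflections and checking compatibility with the preprojective relations at both vertices simultaneously. Using the adjunction from (ii) and the dimension formula from (iv) reduces the task to identifying two naturally defined subspaces of explicit mapping cones, but producing the isomorphism functorially in $M$ is the genuine content of the lemma and where the computation in \cite{BK} is genuinely nontrivial.
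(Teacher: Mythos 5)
The paper itself offers no proof of this lemma --- it is imported verbatim from \cite{BK} --- so your proposal has to stand as an independent argument. For (i), (ii) and (iv) your sketch follows the standard verification from the explicit construction: left/right exactness of kernels/cokernels gives (i); the unit $\id\to\Sigma_{i}\Sigma_{i}^{\ast}$ and counit $\Sigma_{i}^{\ast}\Sigma_{i}\to\id$ of the adjoint pair $(\Sigma_{i}^{\ast},\Sigma_{i})$ give the two sequences in (ii), provided one also checks surjectivity of the unit, injectivity of the counit, and identifies the kernel and cokernel with $\soc_{i}$ and $\tp_{i}$ (note that the adjunction itself is asserted rather than verified in your write-up; with the explicit kernel/cokernel definition it is a routine but genuine check); and the dimension count in (iv) is correct. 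One slip in (iv): $\tp_{i}M=0$ means that the map $\bigoplus_{j}M_{j}\to M_{i}$ assembled from the arrows ending at $i$ is surjective, whereas injectivity of the map out of $i$ is the condition $\soc_{i}M=0$; the two are not equivalent for a fixed $M$. Your computation only uses the former, so the conclusion is unaffected, but the phrasing conflates the two conditions.

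The genuine gap is (iii). What you give there is a plan rather than a proof: the natural isomorphism $\Sigma_{i}\Sigma_{j}\Sigma_{i}\cong\Sigma_{j}\Sigma_{i}\Sigma_{j}$ is never constructed, and you yourself acknowledge that producing it is ``the genuine content of the lemma''. Moreover, the reduction to ``the rank-two subquiver on $\{i,j\}$'' is not accurate as stated: although the vector spaces at vertices other than $i,j$ are unchanged, the new structure maps at $i$ and $j$ are assembled from maps to and from those outer spaces, so the outer data cannot be discarded; it must be carried along as auxiliary spaces and the isomorphism checked to be compatible with it --- this is exactly where the functoriality has content. If you want to complete (iii) without the six-fold kernel/cokernel chase, a cleaner route is through the two-sided ideals $I_{i}=\Lambda(1-e_{i})\Lambda$: identifying $\Sigma_{i}$ with $\Hom_{\Lambda}(I_{i},-)$ and $\Sigma_{i}^{\ast}$ with $I_{i}\otimes_{\Lambda}-$ (cf. \cite{BKT}), the braid relation reduces to the ideal identity $I_{i}I_{j}I_{i}=I_{j}I_{i}I_{j}$ together with the fact that multiplication maps from tensor products of these ideals along reduced words are isomorphisms, both available in \cite{BIRS}. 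As it stands, part (iii) of your proposal is not proved.
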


\section{The modules $M_{\bfi,k}$ via reflection functors}\label{sec 4}
In this section we give a direct proof of the fact that the modules $M_{\bfi,k}$ can be obtained from simple $\Lambda$-modules by applying reflection functors, which is crucial in the proof of our main results. We will use the modules $N(w\varpi_{i})$ defined in \cite{BK} and show that they give an alternative construction of the modules $V_{\bfi,k}$.

\subsection{The modules $N(w\lambda)$}\label{subsec 4 N(w_lambda)}
Let $\widehat{\Gamma}$ be the graph obtained from $\Gamma$ by adding a vertex $i'$ and an edge $d_{i}$ connecting $i$ and $i'$ for each vertex $i\in I$. Then we have the associated preprojective algebra $\widehat{\Lambda}$.

It is convenience to write the dimension vector of a $\widehat{\Lambda}$-module $M$ as a pair $(\mu,\lambda)\in Q_{+}\times P_{+}$. That is, $\mu=\sum_{i\in I}\mu_{i}\alpha_{i}$ and $\lambda=\sum_{i\in I}\lambda_{i}\varpi_{i}$, where $\mu_{i}=\dim M_{i}$ and $\lambda_{i}=\dim M_{i'}$.

Let $\lambda\in P_{+}$ be a dominant weight. Define $\widehat{N}(\lambda)$ to be the $\widehat{\Lambda}$-module with dimension vector $(0,\lambda)$, which is unique up to isomorphism.

For each $w\in W$ with $\ell(w)\geq 1$, we define
$$\widehat{N}(w\lambda)=\Sigma_{i_{r}}\Sigma_{i_{r-1}}\cdots\Sigma_{i_{1}}\widehat{N}(\lambda).$$
where $\bfi=(i_{r},\ldots,i_{1})$ is a reduced expression of $w$. Note that by Lemma \ref{lem prop of ref funt} (iii), $\widehat{N}(w\lambda)$ is independent of the choice of $\bfi$ and thus well-defined.

We have a canonical embedding $\widehat{N}(w\lambda)\hookrightarrow\widehat{N}(s_{i}w\lambda)$ if $\ell(s_{i}w)>\ell(w)$ (see \cite{BK} Section 3.4). In particular, $\widehat{N}(\lambda)$ is a submodule of $\widehat{N}(w\lambda)$. We define $N(w\lambda)$ to be the quotient $\widehat{N}(w\lambda)/\widehat{N}(\lambda)$.

By definition we know that applying $\Sigma_{i}$ for any $i\in I$ to a $\widehat{\Lambda}$-module $M$ does not change the vector space $M_{j'}$ for any $j$. Thus the underlying space of $\widehat{N}(\lambda)$ is exactly the sum of underlying spaces of $\widehat{N}(w\lambda)$ at vertices $\{j'|j\in I\}$. Therefore $N(w\lambda)$ is a $\Lambda$-module.

The following results will be used (see \cite{BK} Theorem 3.1 and Theorem 3.4):
\begin{lem}\label{lem prop of N hat}
(i). For $i\in I$ such that $\ell(s_{i}w)>\ell(w)$, $\widehat{N}(w\lambda)$ has trivial $i$-top.

(ii). For any $i\in I$ and $1\neq w\in W$, $\soc N(w\varpi_{i})=S_{i}$ and $\dimv N(w\varpi_{i})=\varpi_{i}-w\varpi_{i}$.
\end{lem}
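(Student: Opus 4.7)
The plan is to prove (i) first by induction on $\ell(w)$, and then to extract (ii) from (i) together with Lemma~\ref{lem prop of ref funt}.

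For part (i), the base case $w = 1$ is immediate: $\widehat{N}(\lambda)$ has dimension vector $(0,\lambda)$ and hence vanishes at every vertex of $I$, so its $i$-top is trivially zero for every $i \in I$. The key structural input for the inductive step is that any module in the image of $\Sigma_j$ has vanishing $j$-top; this follows by applying the second SES of Lemma~\ref{lem prop of ref funt}(ii) to $\Sigma_j X$ and using that $\Sigma_j \Sigma_j^* \Sigma_j \cong \Sigma_j$, which is itself extracted from the first SES. To derive $\tp_i \widehat{N}(w\lambda) = 0$ for an arbitrary $i$ with $\ell(s_i w) > \ell(w)$, I would invoke the braid invariance in Lemma~\ref{lem prop of ref funt}(iii) to rewrite $\widehat{N}(w\lambda)$, whenever possible, as $\Sigma_i(\cdots)$, immediately applying the previous observation. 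When no reduced expression of $w$ contains $s_i$, I instead show by induction that $\Sigma_j$ for $j \neq i$ preserves vanishing of $\tp_i$, using the vanishing of the relevant $\Ext^1$ between simples at non-adjacent vertices of $\widehat{\Gamma}$.

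For the dimension-vector part of (ii), I would iterate Lemma~\ref{lem prop of ref funt}(iv) along a reduced expression $\bfi = (i_r,\ldots,i_1)$ of $w$. Part~(i) ensures that at every step $k$ the module $\widehat{N}(s_{i_{k-1}}\cdots s_{i_1}\varpi_i)$ has trivial $i_k$-top, so successive $\Sigma_{i_k}$'s act on dimension vectors by the reflection $s_{i_k}$ in the sense of the extended graph $\widehat{\Gamma}$. Starting from $\dimv \widehat{N}(\varpi_i) = \varpi_i$, the composition yields $\dimv \widehat{N}(w\varpi_i)$, and subtracting $\dimv \widehat{N}(\varpi_i)$ leaves $\varpi_i - w\varpi_i \in Q_+$ once one tracks the $\varpi_i$-coordinate (unchanged by every $\Sigma_j$ since $j \neq i'$) against the coordinates at vertices of $I$.

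The main obstacle will be the socle identification $\soc N(w\varpi_i) = S_i$. From the short exact sequence $0 \to \widehat{N}(\varpi_i) \to \widehat{N}(w\varpi_i) \to N(w\varpi_i) \to 0$, any simple summand $S_j$ of $\soc N(w\varpi_i)$ lifts to a length-two submodule $Y \subseteq \widehat{N}(w\varpi_i)$ with $Y/\widehat{N}(\varpi_i) \cong S_j$, giving a necessarily nonzero class in $\Ext^1_{\widehat{\Lambda}}(S_j, S_{i'})$. Since the only vertex of $\widehat{\Gamma}$ adjacent to $i'$ is $i$, the preprojective-algebra $\Ext^1$ formula yields $\Ext^1_{\widehat{\Lambda}}(S_j, S_{i'}) = 0$ for $j \neq i$, forcing $j = i$. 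To pin the multiplicity of $S_i$ in the socle at exactly one, I would induct on $\ell(w)$ using $\widehat{N}(s_j w\varpi_i) = \Sigma_j \widehat{N}(w\varpi_i)$ together with the SES $0 \to \soc_j \to \id \to \Sigma_j\Sigma_j^* \to 0$ from Lemma~\ref{lem prop of ref funt}(ii) to transport socle information across a single reflection functor. Controlling how the socle passes through $\Sigma_j$ is the step I expect to demand the most care.
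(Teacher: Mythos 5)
First, note that the paper does not prove this lemma at all: it is quoted from Baumann--Kamnitzer (\cite{BK}, Theorems 3.1 and 3.4), so your attempt has to stand on its own. It does not, because its central structural claim is backwards. You assert that any module in the image of $\Sigma_j$ has vanishing $j$-top; in fact the image of $\Sigma_j$ consists of modules with vanishing $j$-\emph{socle}. This is exactly how the paper itself uses the functors: in the proof of Proposition \ref{prop M_k and replection funtctor} the module $\widehat{N}(s_{i_k}\varpi_{i_k})=\Sigma_{i_k}\widehat{N}(\varpi_{i_k})$ surjects onto $S_{i_k}$, so it has nontrivial $i_k$-top, and in Lemma \ref{lem Sigma star X} the point is that $M_{\bfi,k}=\Sigma_{i_1}(\cdots)$ has trivial $i_1$-\emph{socle}. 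Your derivation also does not work: the second sequence of Lemma \ref{lem prop of ref funt}(ii) applied to $\Sigma_j X$ would need $\Sigma_j^{\ast}\Sigma_j\Sigma_j\cong\Sigma_j$, not $\Sigma_j\Sigma_j^{\ast}\Sigma_j\cong\Sigma_j$, and the former is false (a concrete counterexample in type $A_2$: $\Sigma_i S_j$ is the length-two module with top $S_i$ and socle $S_j$). Moreover your case analysis is incompatible with the hypothesis of (i): if $\ell(s_iw)>\ell(w)$ then $w$ admits \emph{no} reduced expression beginning with $s_i$, so the rewriting of $\widehat{N}(w\lambda)$ as $\Sigma_i(\cdots)$ is never available, while your fallback case ("no reduced expression of $w$ contains $s_i$") omits the typical situation where $s_i$ occurs in $w$ but not as the leftmost letter (e.g.\ $w=s_js_i$ with $i,j$ adjacent). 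The auxiliary claim that $\Sigma_j$ ($j\neq i$) preserves vanishing of $\tp_i$ is only trivially true for $j$ not adjacent to $i$; for adjacent $j$ it fails in general (type $A_2$: $\Sigma_jP_j\cong S_i$ for the projective-injective $P_j$ with top and socle $S_j$, so trivial $i$-top is not preserved), and the adjacent case is precisely the one that carries the content of (i).

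Part (ii) inherits these problems and has gaps of its own. The dimension-vector computation is fine in outline, but it consumes (i) at every step, so it is not established. For the socle, the assertion that the extension $0\to S_{i'}\to Y\to S_j\to 0$ is "necessarily nonzero" is unjustified: if it split you would get $S_j$ ($j\in I$) inside $\soc\widehat{N}(w\varpi_i)$, and ruling that out is essentially equivalent to what you are trying to prove (in \cite{BK} one shows the socle of $\widehat{N}(w\lambda)$ is concentrated at the added vertices). Finally, the multiplicity-one statement $\soc N(w\varpi_i)=S_i$ is exactly the step you defer ("transport socle information across a single reflection functor"); since $\Sigma_j$ is only left exact and does not commute with taking socles in any naive way, this is the heart of the matter and is not supplied. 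As it stands the proposal does not yield a proof; the correct route is either to cite \cite{BK} as the paper does, or to redo the induction there, which hinges on the trivial-$i$-socle (not $i$-top) property of the image of $\Sigma_i$ together with a genuine analysis of the adjacent-vertex case.
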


\subsection{An alternative construction of $V_{\bfi,k}$}\label{subsec 4 V_k coincides}
From now on we fix $w\in W$ and a reduced expression $\bfi=(i_{r},\ldots,i_{1})$. Recall that $V_{\bfi,k}=\soc_{(i_{k},\ldots,i_{1})}\widehat{I}_{i_{k}}$ (see \ref{subsec 3 GLS parametrization}). It was stated in \cite{BK} without proof that the modules $N(w\lambda)$ had been studied in \cite{GLS2}. We give a precise statement and present a proof in this subsection.

For a dominant weight $\lambda=\sum_{i\in I}\lambda_{i}\varpi_{i}$, we define an injective $\Lambda$-module $\widehat{I}^{\lambda}:=\oplus_{i\in I}\widehat{I}_{i}^{\lambda_{i}}$.

\begin{lem}\label{lem savage lem}
For each $w\in W$, there is a unique (up to isomorphism) submodule of $\widehat{I}^{\lambda}$ with dimension vector $\lambda-w\lambda$.
\end{lem}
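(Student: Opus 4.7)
The plan is to prove existence and uniqueness separately.

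For existence, the natural candidate is the module $N(w\lambda)$ from Section \ref{subsec 4 N(w_lambda)}. Since $\widehat{N}(\lambda)$ is characterized by its dimension vector $(0,\lambda)$, it splits as $\bigoplus_{i\in I}\widehat{N}(\varpi_i)^{\lambda_i}$; the reflection functors $\Sigma_i$ commute with direct sums, so that $\widehat{N}(w\lambda)\cong\bigoplus_{i\in I}\widehat{N}(w\varpi_i)^{\lambda_i}$ and hence $N(w\lambda)\cong\bigoplus_{i\in I}N(w\varpi_i)^{\lambda_i}$. By Lemma \ref{lem prop of N hat}(ii), $\soc N(w\varpi_i)\subseteq S_i$ (with equality when $w\varpi_i\neq\varpi_i$) and $\dimv N(w\varpi_i)=\varpi_i-w\varpi_i$. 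Therefore $\soc N(w\lambda)$ embeds into $\bigoplus_{i\in I}S_i^{\lambda_i}=\soc\widehat{I}^{\lambda}$, and injectivity of $\widehat{I}^{\lambda}$ extends this to an embedding $N(w\lambda)\hookrightarrow\widehat{I}^{\lambda}$ with dimension vector $\sum_i\lambda_i(\varpi_i-w\varpi_i)=\lambda-w\lambda$.

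For uniqueness, I would argue by induction on $\ell(w)$. The base case $w=1$ is trivial, since the only submodule of dimension vector $0$ is the zero submodule. For the inductive step, factor $w=s_iw'$ with $\ell(w)=\ell(w')+1$, and set $m:=\langle w'\lambda,\alpha_i^{\vee}\rangle$, which is nonnegative because $(w')^{-1}\alpha_i$ is positive and $\lambda$ dominant. Then any submodule $M\subseteq\widehat{I}^{\lambda}$ of the required dimension vector satisfies $\dimv M=(\lambda-w'\lambda)+m\alpha_i$. The aim is to exhibit a canonical submodule $M'\subseteq M$ with $\dimv M'=\lambda-w'\lambda$ and $M/M'$ of dimension vector $m\alpha_i$, so that the quotient is automatically $S_i^m$ (since $\Gamma$ has no loops, $\Ext^1_{\Lambda}(S_i,S_i)=0$). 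The inductive hypothesis then gives $M'\cong N(w'\lambda)$, and one reconstructs $M$ as a suitable extension by $S_i^m$ inside $\widehat{I}^{\lambda}$, identifying it with $N(w\lambda)$.

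The main obstacle is the construction of the canonical submodule $M'$ and the verification that the extension $0\to M'\to M\to S_i^m\to 0$ is determined up to isomorphism by the ambient $\widehat{I}^{\lambda}$. I would exploit the functorial short exact sequences in Lemma \ref{lem prop of ref funt}(ii), which relate $\soc_i$ and $\tp_i$ to the compositions $\Sigma_i\Sigma_i^{\ast}$ and $\Sigma_i^{\ast}\Sigma_i$, so that the $i$-local structure of any submodule of $\widehat{I}^{\lambda}$ is governed by reflection functors. In particular, applying $\Sigma_i^{\ast}$ (which is right exact and changes dimension vector by $s_i$ when the $i$-socle vanishes) should transport the pair $(M,\widehat{I}^{\lambda})$ to a pair with strictly smaller Weyl group element, allowing the induction to close and forcing $M\cong N(w\lambda)$.
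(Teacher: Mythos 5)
Your existence argument is fine, but existence is not where the content of the lemma lies (in the application, Proposition \ref{prop V coincide with N}, both candidate submodules are already in hand; the lemma is needed only for uniqueness), and your uniqueness argument has a genuine gap that you yourself flag but do not fill. In the inductive step you never construct the ``canonical'' submodule $M'\subseteq M$: there is no argument that an arbitrary submodule $M\subseteq\widehat{I}^{\lambda}$ of dimension vector $\lambda-w\lambda$ admits a quotient of dimension vector exactly $m\alpha_i$ with kernel of dimension vector $\lambda-w'\lambda$ (why should $\dim\tp_{i}M=m$?). Even granting a short exact sequence $0\to M'\to M\to S_i^{m}\to 0$ with $M'\cong N(w'\lambda)$, uniqueness of $M$ does not follow: $\Ext^{1}_{\Lambda}(S_i,M')$ is in general nonzero and different extension classes give non-isomorphic middle terms, so ``one reconstructs $M$ as a suitable extension inside $\widehat{I}^{\lambda}$'' is precisely the statement to be proved, not a step. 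Finally, the fallback of applying $\Sigma_{i}^{\ast}$ to transport $(M,\widehat{I}^{\lambda})$ to a shorter Weyl group element breaks down at the first move: Lemma \ref{lem prop of ref funt}(iv) computes $\dimv\Sigma_{i}^{\ast}M=s_i(\dimv M)$ only when $\soc_{i}M=0$, whereas submodules of $\widehat{I}^{\lambda}$ typically have nonzero $i$-socle (already $\soc_{i}\widehat{I}^{\lambda}\supseteq S_i^{\lambda_i}$), so $\Sigma_{i}^{\ast}$ destroys an uncontrolled amount of the module and the dimension-vector bookkeeping on which the induction rests is lost.

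For comparison, the paper does not argue module-theoretically at all: it observes that the set of submodules in question is the quiver Grassmannian $Gr(\lambda-w\lambda,\widehat{I}^{\lambda})$, which by Savage's theorem is homeomorphic to the Lagrangian Nakajima quiver variety $\mathcal{L}(\lambda-w\lambda,\lambda)$, and the latter is a single point by another result of Savage. Both existence and uniqueness follow at once; the geometric input carried by those citations is exactly the content your inductive sketch would have to reproduce by hand.
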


\begin{proof}
Let $Gr(\lambda-w\lambda,\widehat{I}^{\lambda})$ be the projective variety consisting of all $\Lambda$-submodules of $\widehat{I}^{\lambda}$ with dimension vector $\lambda-w\lambda$. In \cite{Sav2} it was proved that the variety $Gr(\lambda-w\lambda,\widehat{I}^{\lambda})$ is homeomorphic to the Lagrangian quiver variety $\mathcal{L}(\lambda-w\lambda,\lambda)$. Then the statement in the lemma is just a reformulation of Proposition 5.1 in \cite{Sav1}, which asserts that the variety $\mathcal{L}(\lambda-w\lambda,\lambda)$ is a point.
\end{proof}

Now it is easy to prove the following result:
\begin{prop}\label{prop V coincide with N}
For any $1\leq k\leq r$, $V_{\bfi,k}\cong N(s_{i_{1}}s_{i_{2}}\cdots s_{i_{k}}\varpi_{i_{k}})$.
\end{prop}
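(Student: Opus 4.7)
The plan is to apply Lemma~\ref{lem savage lem} with $\lambda = \varpi_{i_k}$, so that $\widehat{I}^{\varpi_{i_k}} = \widehat{I}_{i_k}$, and the lemma yields a unique submodule of $\widehat{I}_{i_k}$ with dimension vector $\varpi_{i_k} - w_k\varpi_{i_k}$, where $w_k := s_{i_1}s_{i_2}\cdots s_{i_k}$. I shall exhibit both $V_{\bfi,k}$ and an embedded copy of $N(w_k\varpi_{i_k})$ as submodules of $\widehat{I}_{i_k}$ with this common dimension vector; the Savage lemma then identifies them.

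For $N(w_k\varpi_{i_k})$: by Lemma~\ref{lem prop of N hat}(ii) its socle is $S_{i_k}$ and its dimension vector is $\varpi_{i_k} - w_k\varpi_{i_k}$. Since $\widehat{N}(\varpi_{i_k}) = S_{i_k'}$ is a nilpotent simple $\widehat{\Lambda}$-module and the reflection functors $\Sigma_i$ preserve nilpotency, the module $\widehat{N}(w_k\varpi_{i_k})$ is nilpotent as a $\widehat{\Lambda}$-module. Its only composition factor supported on a primed vertex is $S_{i_k'}$, which is precisely $\widehat{N}(\varpi_{i_k})$, so the quotient $N(w_k\varpi_{i_k})$ has only composition factors of the form $S_j$ with $j\in I$. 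Hence $N(w_k\varpi_{i_k})\in\nil(\Lambda)$ and embeds into its injective envelope $\widehat{I}_{i_k}$.

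For $V_{\bfi,k}$: by construction $V_{\bfi,k}\subseteq\widehat{I}_{i_k}$. I verify $\dimv V_{\bfi,k} = \varpi_{i_k} - w_k\varpi_{i_k}$ by induction on $k$. The base case $k=1$ gives $V_{\bfi,1}=S_{i_1}$ with $\alpha_{i_1}=\varpi_{i_1}-s_{i_1}\varpi_{i_1}$. For the inductive step, the defining short exact sequence $0\to V_{\bfi,k^-}\to V_{\bfi,k}\to M_{\bfi,k}\to 0$, together with the Gei{\ss}--Leclerc--Schr\"{o}er dimension formula $\dimv M_{\bfi,k} = \beta_{\bfi,k}$, yields $\dimv V_{\bfi,k} = \dimv V_{\bfi,k^-} + \beta_{\bfi,k}$. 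Using $i_{k^-}=i_k$ together with $s_{i_p}\varpi_{i_k}=\varpi_{i_k}$ for $k^-<p<k$, one obtains the identity
\[
\varpi_{i_k} - w_k\varpi_{i_k} = \bigl(\varpi_{i_{k^-}} - w_{k^-}\varpi_{i_{k^-}}\bigr) + \beta_{\bfi,k},
\]
so the inductive hypothesis on $V_{\bfi,k^-}$ closes the induction (with the convention $V_{\bfi,0}=0$ and $w_0=\id$).

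Finally, Lemma~\ref{lem savage lem} forces $V_{\bfi,k}$ and $N(w_k\varpi_{i_k})$ to be the same submodule of $\widehat{I}_{i_k}$, yielding the asserted isomorphism. The main obstacle is the dimension computation for $V_{\bfi,k}$: it is not an intrinsic consequence of the socle-filtration definition of $V_{\bfi,k}$ but depends on the independently established equality $\dimv M_{\bfi,k}=\beta_{\bfi,k}$ from the Gei{\ss}--Leclerc--Schr\"{o}er framework; care must be taken to avoid circularity, since in other parts of the paper the modules $M_{\bfi,k}$ are themselves being analyzed via reflection functors.
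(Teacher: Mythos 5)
Your proof follows essentially the same route as the paper: both $V_{\bfi,k}$ and $N(s_{i_1}\cdots s_{i_k}\varpi_{i_k})$ are exhibited as submodules of $\widehat{I}_{i_k}$ with dimension vector $\varpi_{i_k}-s_{i_1}\cdots s_{i_k}\varpi_{i_k}$, and then identified by the uniqueness statement of Lemma \ref{lem savage lem}. The only (harmless) deviation is that the paper takes the dimension vector of $V_{\bfi,k}$ directly from \cite{GLS2} (Corollary 9.2), whereas you rederive it by induction from the GLS2 formula $\dimv M_{\bfi,k}=\beta_{\bfi,k}$ — an external fact not obtained via the reflection-functor analysis of this paper, so the circularity you worry about does not arise.
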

\begin{proof}
 By Lemma \ref{lem prop of N hat}, we know that $N(s_{i_{1}}s_{i_{2}}\cdots s_{i_{k}}\varpi_{i_{k}})$ is a submodule of $\widehat{I}_{i_{k}}$ with dimension vector $\varpi_{i_{k}}-s_{i_{1}}s_{i_{2}}\cdots s_{i_{k}}\varpi_{i_{k}}$. By definition and \cite{GLS2} Corollary 9.2 we know that $V_{\bfi,k}$ is also a submodule of $\widehat{I}_{i_{k}}$ with the same dimension vector. Thus they have to be isomorphic by the previous lemma.
\end{proof}

\subsection{Reflection functors and the modules $M_{\bfi,k}$}\label{subsec 4 M_k and refl funct}
We first prove an easy result on reflection functors. Let $\bmod(\Lambda)[i]$ (resp. $\bmod(\Lambda)[i]^{\ast}$) be the subcategory of $\bmod(\Lambda)$ consisting of modules with trivial $i$-top (resp. $i$-socle). Lemma \ref{lem prop of ref funt} (ii) implies that $\Sigma_{i}$ and $\Sigma_{i}^{\ast}$ give inverse equivalences of categories $\bmod(\Lambda)[i]\rightleftarrows\bmod(\Lambda)[i]^{\ast}$. In general the functor $\Sigma_{i}$ is not right exact and $\Sigma_{i}^{\ast}$ is not left exact. But we have the following result:

\begin{lem}\label{lem restrict exactness}
The restriction of $\Sigma_{i}$ (resp. $\Sigma_{i}^{\ast}$) on $\bmod(\Lambda)[i]$ (resp. $\bmod(\Lambda)[i]^{\ast}$) is exact.
\end{lem}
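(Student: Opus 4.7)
The plan is to prove exactness of $\Sigma_i$ on $\bmod(\Lambda)[i]$; the statement for $\Sigma_i^{\ast}$ on $\bmod(\Lambda)[i]^{\ast}$ follows by the same argument with the roles of the two functors swapped. Since Lemma \ref{lem prop of ref funt} (i) already gives left exactness of $\Sigma_i$, it suffices to prove right exactness on $\bmod(\Lambda)[i]$.

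Let $0\to A\to B\to C\to 0$ be a short exact sequence with $A,B,C\in\bmod(\Lambda)[i]$. Applying $\Sigma_i$ and using left exactness, I obtain an exact sequence $0\to\Sigma_i A\to\Sigma_i B\to\Sigma_i C$. Let $D$ be the image of $\Sigma_i B\to\Sigma_i C$, so that $0\to\Sigma_i A\to\Sigma_i B\to D\to 0$ is exact and $D\hookrightarrow\Sigma_i C$. The goal is to show $D=\Sigma_i C$. Note that $\Sigma_i C\in\bmod(\Lambda)[i]^{\ast}$ (by the equivalence stated before the lemma), and since $\bmod(\Lambda)[i]^{\ast}$ is closed under submodules, one has $D\in\bmod(\Lambda)[i]^{\ast}$ as well.

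The next step is to apply $\Sigma_i^{\ast}$, which is right exact by Lemma \ref{lem prop of ref funt} (i). I get an exact sequence $\Sigma_i^{\ast}\Sigma_i A\to\Sigma_i^{\ast}\Sigma_i B\to\Sigma_i^{\ast}D\to 0$. The functorial sequence $0\to\Sigma_i^{\ast}\Sigma_i\to\id\to\tp_i\to 0$ of Lemma \ref{lem prop of ref funt} (ii) yields, on the subcategory $\bmod(\Lambda)[i]$, a natural isomorphism $\Sigma_i^{\ast}\Sigma_i\cong\id$; using this for $A$ and $B$, the displayed exact sequence becomes $A\to B\to\Sigma_i^{\ast}D\to 0$, where the first map is the original inclusion. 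Hence $\Sigma_i^{\ast}D\cong C$, and this isomorphism factors the natural isomorphism $\Sigma_i^{\ast}\Sigma_iC\cong C$ through $\Sigma_i^{\ast}$ applied to the inclusion $D\hookrightarrow\Sigma_i C$.

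Finally, since $\Sigma_i^{\ast}$ restricted to $\bmod(\Lambda)[i]^{\ast}$ is an equivalence of categories, it is fully faithful; therefore the morphism $D\hookrightarrow\Sigma_i C$ is itself an isomorphism, and exactness on the right is established. The main technical point, and the only real obstacle, is the bookkeeping with the two functorial exact sequences of Lemma \ref{lem prop of ref funt} (ii): one must check that the natural isomorphism $\Sigma_i^{\ast}D\cong C$ produced by right exactness is genuinely the composition coming from the inclusion $D\hookrightarrow\Sigma_iC$, so that full faithfulness of $\Sigma_i^{\ast}$ can be invoked to lift it to an isomorphism $D\cong\Sigma_iC$. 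Everything else is a direct application of the properties already collected in Lemma \ref{lem prop of ref funt}.
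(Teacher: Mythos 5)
Your proof is correct, but it takes a genuinely different route from the paper's. The paper's argument is a two-line dimension count: left exactness gives $0\to\Sigma_i M_1\to\Sigma_i M_2\to\Sigma_i M_3$ exact, and since all three modules have trivial $i$-top, Lemma \ref{lem prop of ref funt} (iv) gives $\dimv(\Sigma_i M_j)=s_i(\dimv M_j)$, so additivity of $s_i$ forces the cokernel term to vanish, i.e.\ the sequence is short exact. You instead argue categorically: you form the image $D\subseteq\Sigma_i C$ of $\Sigma_i B\to\Sigma_i C$, apply the right exact functor $\Sigma_i^{\ast}$, use the functorial sequence of Lemma \ref{lem prop of ref funt} (ii) to identify $\Sigma_i^{\ast}\Sigma_i\cong\id$ on $\bmod(\Lambda)[i]$ and hence $\Sigma_i^{\ast}D\cong C$, and then invoke that the equivalence $\Sigma_i^{\ast}$ on $\bmod(\Lambda)[i]^{\ast}$ reflects isomorphisms to conclude $D=\Sigma_i C$. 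The compatibility you flag does go through: by naturality of $\Sigma_i^{\ast}\Sigma_i\to\id$, the composite $\Sigma_i^{\ast}\Sigma_i B\to\Sigma_i^{\ast}D\to\Sigma_i^{\ast}\Sigma_i C\cong C$ equals $B\to C$, and since $\Sigma_i^{\ast}\Sigma_i B\to\Sigma_i^{\ast}D$ is epi, the map $\Sigma_i^{\ast}D\to C$ induced by the inclusion $D\hookrightarrow\Sigma_i C$ must be the canonical isomorphism $B/A\cong C$; you should also record the (easy) fact that $\bmod(\Lambda)[i]^{\ast}$ is closed under submodules, which you use to place $D$ in the domain of the equivalence. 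In exchange for this extra bookkeeping, your argument avoids the dimension-vector identity (iv) altogether and uses only the unit/counit sequences and the induced equivalence, which makes it more formal and transferable to settings without a convenient dimension count; the paper's proof buys brevity at the cost of being specific to the numerical statement (iv).
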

\begin{proof}
We only prove the statement for $\Sigma_{i}$. The one for $\Sigma_{i}^{\ast}$ can be proved similarly.

Suppose we have the following short exact sequence in $\bmod(\Lambda)[i]$:
$$0\rightarrow M_{1}\rightarrow M_{2}\rightarrow M_{3}\rightarrow 0.$$

Since $\Sigma_{i}$ is left exact, we have the exact sequence
\begin{equation}\label{equ 1}
\Sigma_{i}M_{1}\rightarrow \Sigma_{i}M_{2}\rightarrow \Sigma_{i}M_{3}\rightarrow 0.
\end{equation}

By Lemma \ref{lem prop of ref funt} (iv), we have
$$\dimv(\Sigma_{i}M_{j})=s_{i}(\dimv M_{j}),\quad \text{for } j=1,2,3.$$

So $\dimv(\Sigma_{i}M_{2})=\dimv(\Sigma_{i}M_{1})+\dimv(\Sigma_{i}M_{3})$, which forces (\ref{equ 1}) to be a short exact sequence.
\end{proof}

Next we show how to get the modules $M_{\bfi,k}$ via reflection functors.

\begin{prop}\label{prop M_k and replection funtctor}
For any $1\leq k\leq r$, $M_{\bfi,k}\cong\Sigma_{i_{1}}\Sigma_{i_{2}}\cdots\Sigma_{i_{k-1}}S_{i_{k}}$.
\end{prop}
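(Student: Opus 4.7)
To set up, let $w_{j}:=s_{i_{1}}s_{i_{2}}\cdots s_{i_{j}}$ and $A:=\widehat{N}(\varpi_{i_{k}})$. Proposition \ref{prop V coincide with N} identifies $V_{\bfi,k}\cong N(w_{k}\varpi_{i_{k}})$. The same proposition gives $V_{\bfi,k^{-}}\cong N(w_{k^{-}}\varpi_{i_{k}})$, and since by the definition of $k^{-}$ the letters $i_{k^{-}+1},\ldots,i_{k-1}$ all differ from $i_{k}$, the corresponding reflections fix $\varpi_{i_{k}}$, so $w_{k^{-}}\varpi_{i_{k}}=w_{k-1}\varpi_{i_{k}}$. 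Quotienting by the common submodule $\widehat{N}(\varpi_{i_{k}})$ therefore identifies $M_{\bfi,k}=V_{\bfi,k}/V_{\bfi,k^{-}}$ with $B_{k}/B_{k-1}$, where I set $B_{j}:=\widehat{N}(w_{j}\varpi_{i_{k}})=\Sigma_{i_{1}}\Sigma_{i_{2}}\cdots\Sigma_{i_{j}}A$ (the second equality using that $(i_{1},\ldots,i_{j})$ is a reduced expression of $w_{j}$).

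I then start from the short exact sequence
\[ 0\rightarrow A\rightarrow\Sigma_{i_{k}}A\rightarrow S_{i_{k}}\rightarrow 0, \]
whose right-hand term is extracted as follows: by Lemma \ref{lem prop of N hat}(ii), the quotient $\Sigma_{i_{k}}A/A=N(s_{i_{k}}\varpi_{i_{k}})$ has dimension vector $\alpha_{i_{k}}$ and socle $S_{i_{k}}$, so it equals $S_{i_{k}}$. I apply $\Sigma_{i_{k-1}},\Sigma_{i_{k-2}},\ldots,\Sigma_{i_{1}}$ in turn and prove, by induction on $\ell\in\{0,1,\ldots,k-1\}$, that
\[ 0\rightarrow X_{\ell}\rightarrow Y_{\ell}\rightarrow Z_{\ell}\rightarrow 0 \]
remains short exact, where $X_{\ell}:=\Sigma_{i_{k-\ell}}\cdots\Sigma_{i_{k-1}}A$, $Y_{\ell}:=\Sigma_{i_{k-\ell}}\cdots\Sigma_{i_{k-1}}\Sigma_{i_{k}}A$ and $Z_{\ell}:=\Sigma_{i_{k-\ell}}\cdots\Sigma_{i_{k-1}}S_{i_{k}}$. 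The terminal case $\ell=k-1$ then reads $0\rightarrow B_{k-1}\rightarrow B_{k}\rightarrow\Sigma_{i_{1}}\cdots\Sigma_{i_{k-1}}S_{i_{k}}\rightarrow 0$, from which the claimed isomorphism is immediate.

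The inductive step, going from $\ell$ to $\ell+1$ by applying $\Sigma_{i_{k-\ell-1}}$, follows from Lemma \ref{lem restrict exactness} provided that $X_{\ell}$, $Y_{\ell}$ and $Z_{\ell}$ all have trivial $i_{k-\ell-1}$-top. The modules $X_{\ell}$ and $Y_{\ell}$ are by construction of the form $\widehat{N}(v\varpi_{i_{k}})$ for $v$ a consecutive subword of $\bfi$; such subwords remain reduced after prefixing $s_{i_{k-\ell-1}}$, and Lemma \ref{lem prop of N hat}(i) then kills their $i_{k-\ell-1}$-top. The delicate point, which I expect to be the main obstacle, is that $Z_{\ell}$ is not of this form and no direct dimension-vector argument is available. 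The trick is to exploit the right-exactness of the functor $\tp_{i_{k-\ell-1}}$: applied to the (inductively short exact) surjection $Y_{\ell}\twoheadrightarrow Z_{\ell}$, it yields a surjection $\tp_{i_{k-\ell-1}}Y_{\ell}\twoheadrightarrow\tp_{i_{k-\ell-1}}Z_{\ell}$, and the domain already vanishes. This completes the inductive step, and hence the proof.
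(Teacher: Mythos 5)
Your argument is correct and is essentially the paper's own proof: the same induction pushing $0\to\widehat{N}(\varpi_{i_{k}})\to\Sigma_{i_{k}}\widehat{N}(\varpi_{i_{k}})\to S_{i_{k}}\to 0$ through $\Sigma_{i_{k-1}},\ldots,\Sigma_{i_{1}}$ using Lemma \ref{lem restrict exactness}, Lemma \ref{lem prop of N hat}(i), and the observation that the third term has trivial top because it is a quotient of the middle term, combined with Proposition \ref{prop V coincide with N} and the fact that the letters strictly between $k^{-}$ and $k$ differ from $i_{k}$; the only cosmetic difference is that you perform the identification $M_{\bfi,k}\cong \widehat{N}(w_{k}\varpi_{i_{k}})/\widehat{N}(w_{k-1}\varpi_{i_{k}})$ at the outset rather than at the end. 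One small point: the equality $\widehat{N}(w_{k-1}\varpi_{i_{k}})=\widehat{N}(w_{k^{-}}\varpi_{i_{k}})$ should be justified at the level of modules (as in the paper, by noting $\Sigma_{i_{l}}\widehat{N}(\varpi_{i_{k}})=\widehat{N}(\varpi_{i_{k}})$ for $i_{l}\neq i_{k}$ since $\widehat{N}(\varpi_{i_{k}})$ is concentrated at $i_{k}'$), not merely from the equality of the weights $w_{k-1}\varpi_{i_{k}}=w_{k^{-}}\varpi_{i_{k}}$, since within this paper $\widehat{N}(w\lambda)$ is defined via the Weyl group element and not intrinsically via the weight.
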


\begin{proof}
First we have the following short exact sequence:
$$0\rightarrow \widehat{N}(\varpi_{i_{k}})\rightarrow \widehat{N}(s_{i_{k}}\varpi_{i_{k}})\rightarrow S_{i_{k}}\rightarrow 0.$$
since $\widehat{N}(s_{i_{k}}\varpi_{i_{k}})=\Sigma_{i_{k}}\widehat{N}(\varpi_{i_{k}})$ and $\widehat{N}(\varpi_{i_{k}})=S_{i_{k}'}$ .

It is clear that the three modules occurring in the above sequence have trivial $i_{k-1}$-top. So applying the functor $\Sigma_{i_{k-1}}$ and using the previous lemma, we have the short exact sequence
\begin{equation*}
0\rightarrow \widehat{N}(s_{i_{k-1}}\varpi_{i_{k}}) \rightarrow \widehat{N}(s_{i_{k-1}}s_{i_{k}}\varpi_{i_{k}})\rightarrow \Sigma_{i_{k-1}}S_{i_{k}}\rightarrow 0.
\end{equation*}

Now by Lemma \ref{lem prop of N hat} (i), $\widehat{N}(s_{i_{k-1}}s_{i_{k}}\varpi_{i_{k}})$ and $\widehat{N}(s_{i_{k-1}}\varpi_{i_{k}})$ both have trivial $i_{k-2}$-top. So $\Sigma_{i_{k-1}}S_{i_{k}}$ also has trivial $i_{k-2}$-top. Hence we can repeat the above procedure. Namely, applying $\Sigma_{i_{k-2}},\ldots,\Sigma_{i_{1}}$ successively, we have the following short exact sequence
\begin{equation}\label{equ 2}
0\rightarrow \widehat{N}(s_{i_{1}}\cdots s_{i_{k-1}}\varpi_{i_{k}})\rightarrow \widehat{N}(s_{i_{1}}\cdots s_{i_{k}}\varpi_{i_{k}})\rightarrow \Sigma_{i_{1}}\cdots\Sigma_{i_{k-1}}S_{i_{k}}\rightarrow 0.
\end{equation}

Note that for any $l$ such that $k^{-}<l<k$, we have $i_{l}\neq i_{k}$. This implies $\widehat{N}(s_{i_{l}}\varpi_{i_{k}})=\Sigma_{i_{l}}\widehat{N}(\varpi_{i_{k}})=\widehat{N}(\varpi_{i_{k}})$ because the module $\widehat{N}(\varpi_{i_{k}})$ is concentrated at the vertex $i_{k}'$, which is not connected with any other vertex except $i_{k}$. So we have
$$\Sigma_{i_{1}}\Sigma_{i_{2}}\cdots\Sigma_{i_{k-1}}\widehat{N}(\varpi_{i_{k}})=\Sigma_{i_{1}}\Sigma_{i_{2}}\cdots\Sigma_{i_{k^{-}}}\widehat{N}
(\varpi_{i_{k}}).$$

Thus the sequence (\ref{equ 2}) is the following
\begin{equation}\label{equ 3}
0\rightarrow \widehat{N}(s_{i_{1}}s_{i_{2}}\cdots s_{i_{k^{-}}}\varpi_{i_{k}})\rightarrow \widehat{N}(s_{i_{1}}s_{i_{2}}\cdots s_{i_{k}}\varpi_{i_{k}})\rightarrow \Sigma_{i_{1}}\Sigma_{i_{2}}\cdots\Sigma_{i_{k-1}}S_{i_{k}}\rightarrow 0.
\end{equation}

Now $\widehat{N}(s_{i_{1}}s_{i_{2}}\cdots s_{i_{k^{-}}}\varpi_{i_{k}})$ and $\widehat{N}(s_{i_{1}}s_{i_{2}}\cdots s_{i_{k}}\varpi_{i_{k}})$ both have the submodule $\widehat{N}(\varpi_{i_{k}})$. And the map $\widehat{N}(s_{i_{1}}s_{i_{2}}\cdots s_{i_{k}}\varpi_{i_{k}})\rightarrow \Sigma_{i_{1}}\Sigma_{i_{2}}\cdots\Sigma_{i_{k-1}}S_{i_{k}}$ clearly maps $\widehat{N}(\varpi_{i_{k}})$ to zero. So (\ref{equ 3}) yields
$$0\rightarrow N(s_{i_{1}}s_{i_{2}}\cdots s_{i_{k^{-}}}\varpi_{i_{k}})\rightarrow N(s_{i_{1}}s_{i_{2}}\cdots s_{i_{k}}\varpi_{i_{k}})\rightarrow \Sigma_{i_{1}}\Sigma_{i_{2}}\cdots\Sigma_{i_{k-1}}S_{i_{k}}\rightarrow 0.$$

Applying Proposition \ref{prop V coincide with N} (note that $i_{k}=i_{k^{-}}$) we have
$$0\rightarrow V_{\bfi,k^{-}}\rightarrow V_{\bfi,k}\rightarrow \Sigma_{i_{1}}\Sigma_{i_{2}}\cdots\Sigma_{i_{k-1}}S_{i_{k}}\rightarrow 0.$$

Hence $\Sigma_{i_{1}}\Sigma_{i_{2}}\cdots\Sigma_{i_{k-1}}S_{i_{k}}\cong V_{\bfi,k}/V_{\bfi,k^{-}}=M_{\bfi,k}$.
\end{proof}

The proof of the above theorem implies the following corollary, which will be used in the next section:
\begin{cor}\label{cor trivial top}
For any $l<k-1$, the module $\Sigma_{i_{l+1}}\cdots\Sigma_{i_{k-1}}S_{i_{k}}$ has trivial $i_{l}$-top.
\end{cor}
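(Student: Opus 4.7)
The plan is to extract Corollary \ref{cor trivial top} directly from an intermediate stage of the iterative construction carried out in the proof of Proposition \ref{prop M_k and replection funtctor}. In that proof, one applies $\Sigma_{i_{k-1}}, \Sigma_{i_{k-2}}, \ldots, \Sigma_{i_{1}}$ in succession to a starting short exact sequence, and at each stage all three terms have trivial top at the vertex of the next reflection functor to be applied — this is exactly what legitimises the use of Lemma \ref{lem restrict exactness} at each iteration. The assertion of the corollary is precisely that this property holds at the $(l{+}1)$-st stage, applied to the third term of the intermediate sequence.

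To implement the plan, I would halt the induction in the proof of Proposition \ref{prop M_k and replection funtctor} after applying $\Sigma_{i_{k-1}},\ldots,\Sigma_{i_{l+1}}$, obtaining the short exact sequence
$$0\rightarrow\widehat{N}(s_{i_{l+1}}s_{i_{l+2}}\cdots s_{i_{k-1}}\varpi_{i_{k}})\rightarrow\widehat{N}(s_{i_{l+1}}s_{i_{l+2}}\cdots s_{i_{k}}\varpi_{i_{k}})\rightarrow\Sigma_{i_{l+1}}\cdots\Sigma_{i_{k-1}}S_{i_{k}}\rightarrow 0.$$
I would then verify that both $\widehat{N}$-modules on the left two terms have trivial $i_{l}$-top by invoking Lemma \ref{lem prop of N hat} (i). This requires checking that left-multiplication by $s_{i_{l}}$ strictly increases the lengths of both $s_{i_{l+1}}\cdots s_{i_{k-1}}$ and $s_{i_{l+1}}\cdots s_{i_{k}}$. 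Both length inequalities follow from the standard Coxeter-theoretic fact that every contiguous subword of a reduced expression is itself reduced: since $(i_{1},i_{2},\ldots,i_{r})$ is a reduced expression for $w^{-1}$, the subwords $(i_{l},i_{l+1},\ldots,i_{k-1})$ and $(i_{l},i_{l+1},\ldots,i_{k})$ are reduced, giving the desired length inequalities.

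Finally, since the top functor $\tp_{i_{l}}$ is right exact, applying it to the short exact sequence above yields an epimorphism from $\tp_{i_{l}}(\widehat{N}(s_{i_{l+1}}\cdots s_{i_{k}}\varpi_{i_{k}}))=0$ onto $\tp_{i_{l}}(\Sigma_{i_{l+1}}\cdots\Sigma_{i_{k-1}}S_{i_{k}})$, forcing the latter to vanish. No substantive obstacle is anticipated: the argument is essentially a rereading of the inductive construction in Proposition \ref{prop M_k and replection funtctor} with attention paid to the tops of the two $\widehat{N}$-modules at the chosen stopping point, and the only external input is the reducedness of contiguous subwords of a reduced expression.
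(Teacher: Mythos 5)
Your proposal is correct and takes essentially the same route as the paper: the corollary is exactly what one reads off at the intermediate stage of the induction in the proof of Proposition \ref{prop M_k and replection funtctor}, where Lemma \ref{lem prop of N hat} (i) gives trivial $i_{l}$-top for the two $\widehat{N}$-terms of the short exact sequence and hence for their quotient $\Sigma_{i_{l+1}}\cdots\Sigma_{i_{k-1}}S_{i_{k}}$. Your explicit verification of the length inequalities via reducedness of contiguous subwords of the reduced expression merely makes precise what the paper leaves implicit.
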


\subsection{Remarks on the adaptable case}\label{subsec 4 adaptable}
Let $Q$ be a quiver with underlying graph $\Gamma$. For $i\in I$, denote by $\sigma_{i}Q$ the quiver obtained from $Q$ by reversing all the arrows connected with $i$, if $i$ is a sink or a source.

A reduced expression $\bfi=(i_{r},\ldots,i_{1})$ of $w\in W$ is called \textit{$Q$-adapted} if $i_{1}$ is a sink of $Q$ and $i_{k}$ is a sink of $\sigma_{i_{k-1}}\cdots\sigma_{i_{1}}Q$ for all $2\leq k\leq r$. An element $w\in W$ is called \textit{adaptable} if there exists a quiver $Q$ and a reduced expression such that $\bfi$ is $Q$-adapted.

As pointed out in \cite{GLS2}, if $w$ is adaptable and $\bfi$ is $Q^{\op}$-adapted, the module $M_{\bfi}$ is a \textit{terminal $\mathbb{C}Q$-module} in the sense of \cite{GLS1}. This means that the modules $M_{\bfi,k}$ ($1\leq k\leq r$) are certain indecomposable preinjective $\mathbb{C}Q$-modules. In particular, if $\mathfrak{g}$ is finite-dimensional, the longest element $w_{0}\in W$ is always adaptable. If $\bfi$ is a $Q^{\op}$-adapted reduced expression of $w_{0}$, the set $\{M_{\bfi,k}|1\leq k\leq r\}$ forms a complete set of pairwise non-isomorphic indecomposable $\mathbb{C}Q$-modules. But in general there does not exist any quiver $Q$ such that all the modules $M_{\bfi,k}$ are $\mathbb{C}Q$-modules.

Note that for any quiver $Q$, the indecomposable preprojective and preinjective modules can be obtained from simple modules via BGP-reflection functors (see \cite{BGP} Theorem 1.3). Thus our Proposition \ref{prop M_k and replection funtctor} can be viewed as a generalization of this classical result to the case of any $w\in W$ and any reduced expression $\bfi$.

\section{Compatibility of two parametrizations}\label{sec 5}
Throughout this section, we fix $w\in W$ and a reduced expression $\bfi=(i_{r},\ldots,i_{1})$.

\subsection{The main result}\label{subsec 5 state of main thm}
Recall that we have injective maps $\psi_{\bfi}:\mathbb{N}^{r}\rightarrow\mathscr{B}(\infty)$ (see \ref{subsec 2 parametrization of crystal}), $\phi_{\bfi}:\mathbb{N}^{r}\rightarrow\mathcal{B}$ (see \ref{subsec 3 GLS parametrization}) and the Kashiwara-Saito crystal isomorphism $\Psi:\mathscr{B}(\infty)\simeq\mathcal{B}$.

\begin{thm}\label{thm main}
The following diagram is commutative:
\begin{equation*}
\xymatrix{
\mathbb{N}^{r} \ar[r]^-{\psi_{\bfi}} \ar[dr]_-{\phi_{\bfi}} & \mathscr{B}(\infty) \ar[d]^{\Psi}\\
                              & \mathcal{B}
}
\end{equation*}
i.e. with the notations in \ref{subsec 2 parametrization of crystal} and \ref{subsec 3 GLS parametrization}, we have $\Psi(b_{\bfi,\bfa})=Z_{\bfi}^{\bfa}$, for any $\bfa\in\mathbb{N}^{r}$.
\end{thm}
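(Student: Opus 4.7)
My plan is to proceed by induction on the length $r$ of $\bfi$, cutting off the rightmost (innermost) simple reflection. Let $\bfi' = (i_r, \ldots, i_2)$, which is a reduced expression of $w s_{i_1}$ of length $r-1$, and for $\bfa = (a_r, \ldots, a_1)$ set $\bfa' = (a_r, \ldots, a_2)$. The base case $r=0$ is immediate, so I may assume inductively that $\Psi(b_{\bfi', \bfa'}) = Z_{\bfi'}^{\bfa'}$.

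On the algebraic side, the defining recursion $F(\beta_{\bfi,k}) = T_{i_1}^{-1}(F(\beta_{\bfi',k-1}))$ for $k \geq 2$ together with $F(\beta_{\bfi,1}) = f_{i_1}$ yields the clean factorization
\[
F_{\bfi}(\bfa) \;=\; T_{i_1}^{-1}\!\bigl(F_{\bfi'}(\bfa')\bigr) \cdot f_{i_1}^{(a_1)}.
\]
Reducing modulo $q\mathscr{L}(\infty)$ this translates into the crystal identity $b_{\bfi,\bfa} = (\tilde{f}_{i_1}^{*})^{a_1}\sigma_{i_1}(b_{\bfi',\bfa'})$, where $\sigma_{i_1}$ is Saito's crystal reflection (the shadow of $T_{i_1}^{-1}$ on the appropriate subcrystal of $\mathscr{B}(\infty)$) and $\tilde{f}_{i_1}^{*}$ is Kashiwara's dual operator.

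On the geometric side, Proposition \ref{prop M_k and replection funtctor} combined with Corollary \ref{cor trivial top} shows that $M_{\bfi,k}$ for $k \geq 2$ lies in the image of $\Sigma_{i_1}$ and therefore has trivial $i_1$-socle. Since an extension of two modules with trivial $i_1$-socle still has trivial $i_1$-socle, it follows that for \emph{every} $X \in \Lambda_{\bfi}^{\bfa}$ the first layer of the defining filtration satisfies $\soc_{i_1} X = X_1 \cong S_{i_1}^{a_1}$ and $X/X_1$ lies in $\bmod(\Lambda)[i_1]^{*}$. Applying $\Sigma_{i_1}^{*}$, which by Lemma \ref{lem restrict exactness} is exact on this subcategory, transports the filtration of $X/X_1$ to one whose factors are $\Sigma_{i_1}^{*}(M_{\bfi,k})^{a_k} = M_{\bfi',k-1}^{a_k}$, so $\Sigma_{i_1}^{*}(X/X_1) \in \Lambda_{\bfi'}^{\bfa'}$. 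Conversely, any generic $Y \in \Lambda_{\bfi'}^{\bfa'}$ reconstructs $X$ as a generic extension of $\Sigma_{i_1} Y$ by $S_{i_1}^{a_1}$. Thus $Z_{\bfi}^{\bfa}$ is obtained from $Z_{\bfi'}^{\bfa'}$ by the two component-level operations ``apply $\Sigma_{i_1}$'' and ``take a generic socle-extension by $S_{i_1}^{a_1}$''.

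To close the induction I must match these two geometric operations with the crystal operations $\sigma_{i_1}$ and $(\tilde{f}_{i_1}^{*})^{a_1}$ under $\Psi$. The socle-extension identification is built into the Kashiwara--Saito crystal structure on $\mathcal{B}$, so the main obstacle is identifying the preprojective-algebra reflection functor $\Sigma_{i_1}$ on irreducible components with Saito's reflection $\sigma_{i_1}$ on the crystal. In the finite-type, $Q$-adapted setting this is precisely Baumann--Kamnitzer's theorem; extending it to an arbitrary Kac--Moody type and arbitrary reduced expression is the essential technical step, and is where I expect the greatest care to be needed (in particular regarding the interaction of $\Sigma_{i_1}$ with the crystal operators on the subcrystal $\{b \mid \tilde{e}_{i_1}^{*} b = 0\}$). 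Granting this matching, the algebraic and geometric recursions coincide, and the induction closes: $\Psi^{-1}(Z_{\bfi}^{\bfa}) = (\tilde{f}_{i_1}^{*})^{a_1}\sigma_{i_1}(\Psi^{-1}(Z_{\bfi'}^{\bfa'})) = (\tilde{f}_{i_1}^{*})^{a_1}\sigma_{i_1}(b_{\bfi',\bfa'}) = b_{\bfi,\bfa}$.
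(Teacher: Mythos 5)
Your outline follows the paper's own strategy almost exactly: peel off $i_{1}$, use the PBW factorization $F_{\bfi}(\bfa)=T_{i_{1}}^{-1}(F_{\bfi'}(\bfa'))\,f_{i_{1}}^{(a_{1})}$ together with Saito's results to get the crystal recursion (this is Proposition \ref{prop determine Lusztig data}, which indeed rests on \cite{Sai}, Propositions 2.1.2 and 3.4.7), and on the geometric side use Proposition \ref{prop M_k and replection funtctor}, Corollary \ref{cor trivial top} and Lemma \ref{lem restrict exactness} to show $\soc_{i_{1}}X\cong S_{i_{1}}^{a_{1}}$ and $\Sigma_{i_{1}}^{\ast}X\in\Lambda_{\bfi'}^{\bfa'}$ for $X\in\Lambda_{\bfi}^{\bfa}$ (Lemma \ref{lem Sigma star X}). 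The one step you leave as an assumption --- matching the reflection functor $\Sigma_{i_{1}}$/$\Sigma_{i_{1}}^{\ast}$ on irreducible components with Saito's crystal reflection $\mathcal{T}_{i_{1}}$ under $\Psi$ --- is exactly the content of Lemma \ref{lem Z i' a'}, but your worry that it is only available in the finite-type, $Q$-adapted setting and would have to be extended is misplaced: the dual of Proposition 5.5 of \cite{BK} already holds for an arbitrary symmetric Kac--Moody type and makes no adaptedness assumption (adaptedness enters \cite{BK} only in their parametrization result, Proposition 7.8, as explained in Remark \ref{rem importance of thm 1}). Concretely, that result gives a dense open $U\subseteq Z_{\bfi}^{\bfa}$, which one may take inside $\Lambda_{\bfi}^{\bfa}$, such that $\Sigma_{i_{1}}^{\ast}X$ for $X\in U$ lands in a dense open subset of the component $\mathcal{T}_{i_{1}}\widetilde{e}_{i_{1}}^{\ast\max}(Z_{\bfi}^{\bfa})$; combined with your forward inclusion and the uniqueness statement in Proposition \ref{prop parametrization of irr cpt}, this identifies that component with $Z_{\bfi'}^{\bfa'}$, so your converse ``generic extension'' reconstruction is not actually needed. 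Thus the genuinely new ingredients are the ones you already have (the realization of the $M_{\bfi,k}$ by reflection functors, the socle computation, and the PBW/crystal factorization), and once you replace ``granting this matching'' by the citation above, your induction closes and agrees with the paper's argument, which runs the same recursion downward to the highest weight elements $b_{0}$ and $Z_{0}$ and compares.
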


The theorem will be proved in the next subsection. From the theorem it follows immediately that the subset $\mathcal{B}(w)$ is the geometric counterpart of the unipotent crystal $\mathscr{B}(w)$:

\begin{cor}\label{cor geom descriotion of unipt crystal}
$\Psi(\mathscr{B}(w))=\mathcal{B}(w)$.
\end{cor}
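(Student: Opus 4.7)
The plan is to argue by induction on $r=\ell(w)$, peeling off the innermost letter $s_{i_1}$ of $\bfi=(i_r,\ldots,i_1)$. The base case $r=1$ is immediate: $F_\bfi((a))=f_{i_1}^{(a)}$ has crystal class mapping under $\Psi$ to the unique irreducible component of $\Lambda(a\alpha_{i_1})$, namely $\overline{\{S_{i_1}^{a}\}}=Z_\bfi^{(a)}$.

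For the inductive step set $\bfi^\circ=(i_r,\ldots,i_2)$, $w^\circ=s_{i_r}\cdots s_{i_2}$, and $\bfa^\circ=(a_r,\ldots,a_2)$, so $w=w^\circ s_{i_1}$ and $\ell(w^\circ)=r-1$. Using $F(\beta_{\bfi,k})=T_{i_1}^{-1}(F(\beta_{\bfi^\circ,k-1}))$ for $k\geq 2$ one reads off the recursion
\begin{equation*}
F_\bfi(\bfa)=T_{i_1}^{-1}\bigl(F_{\bfi^\circ}(\bfa^\circ)\bigr)\cdot f_{i_1}^{(a_1)}.
\end{equation*}
On the algebraic side, $b_{\bfi^\circ,\bfa^\circ}\in\mathscr{B}(w^\circ)$ and $\ell(s_{i_1}w^\circ)>\ell(w^\circ)$; Kimura's compatibility of $U_q^-(w^\circ)$ with the star crystal forces $\varepsilon_{i_1}^\ast(b_{\bfi^\circ,\bfa^\circ})=0$, so the class of $T_{i_1}^{-1}(F_{\bfi^\circ}(\bfa^\circ))$ modulo $q\mathscr{L}(\infty)$ is Saito's dual reflection $\sigma_{i_1}^\ast(b_{\bfi^\circ,\bfa^\circ})$, while right multiplication by $f_{i_1}^{(a_1)}$ becomes the dual Kashiwara operator $(\tilde f_{i_1}^\ast)^{a_1}$. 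This yields the algebraic recursion
\begin{equation*}
b_{\bfi,\bfa}=(\tilde f_{i_1}^\ast)^{a_1}\,\sigma_{i_1}^\ast(b_{\bfi^\circ,\bfa^\circ}).
\end{equation*}

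The parallel geometric recursion comes from Proposition \ref{prop M_k and replection funtctor} together with Lemma \ref{lem restrict exactness}. For $k\geq 2$ one has $M_{\bfi,k}\cong\Sigma_{i_1}(M_{\bfi^\circ,k-1})$, and each $M_{\bfi^\circ,k-1}$ has trivial $i_1$-top by Corollary \ref{cor trivial top}; hence $\Sigma_{i_1}$ is exact on extensions of such modules and converts any filtration of $Y\in\Lambda_{\bfi^\circ}^{\bfa^\circ}$ by $M_{\bfi^\circ,k-1}^{a_k}$ into a filtration of $\Sigma_{i_1}(Y)$ by $M_{\bfi,k}^{a_k}$ for $k\geq 2$. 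A generic such $\Sigma_{i_1}(Y)$ has trivial $i_1$-socle, and enlarging the socle by $S_{i_1}^{a_1}$ (the only way to realize the remaining piece $X_1\cong S_{i_1}^{a_1}=M_{\bfi,1}^{a_1}$ at the bottom of the filtration) yields a generic point of $\Lambda_\bfi^\bfa$. Passed to irreducible components this reads $Z_\bfi^\bfa=(\tilde f_{i_1}^\ast)^{a_1}\,\widetilde{\Sigma}_{i_1}(Z_{\bfi^\circ}^{\bfa^\circ})$, where $\widetilde{\Sigma}_{i_1}$ denotes the component-level map induced by $\Sigma_{i_1}$ and $\tilde f_{i_1}^\ast$ is the Kashiwara--Saito crystal operator on $\mathcal{B}$ that enlarges the $i_1$-socle.

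Combining the two recursions and applying the inductive hypothesis, the theorem reduces to two compatibilities of $\Psi$: that it intertwines $\tilde f_{i_1}^\ast$ on the two sides (automatic, since $\Psi$ is a crystal isomorphism), and that it intertwines $\sigma_{i_1}^\ast$ with $\widetilde{\Sigma}_{i_1}$ on the relevant crystal subset. This second point--matching Saito's algebraic reflection on Lusztig data with the preprojective-algebra reflection functor on irreducible components--is the main obstacle I expect and is precisely the content of Baumann and Kamnitzer's results on reflection functors for preprojective algebras; combined with Proposition \ref{prop M_k and replection funtctor} it closes the induction and gives $\Psi(b_{\bfi,\bfa})=Z_\bfi^\bfa$ for all $\bfa\in\mathbb{N}^r$.
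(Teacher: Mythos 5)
Your overall route is the one the paper itself takes: the corollary is immediate once the pointwise identity $\Psi(b_{\bfi,\bfa})=Z_{\bfi}^{\bfa}$ (Theorem \ref{thm main}) is established, and your induction on $r$ is essentially the paper's peeling argument (Proposition \ref{prop determine Lusztig data}, Corollary \ref{cor reach b from the hwc}, Lemmas \ref{lem Sigma star X} and \ref{lem Z i' a'}) run bottom-up instead of top-down, with the matching of Saito's crystal reflection and the reflection functor on irreducible components delegated to Baumann--Kamnitzer's Proposition 5.5 (in dual form), exactly as in Lemma \ref{lem Z i' a'}. So the skeleton is sound and not genuinely different from the paper.

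There is, however, a concrete error in your algebraic recursion. It is not true in general that $\ell(s_{i_1}w^{\circ})>\ell(w^{\circ})$ (what holds is $\ell(w^{\circ}s_{i_1})>\ell(w^{\circ})$), and the asserted vanishing $\varepsilon_{i_1}^{\ast}(b_{\bfi^{\circ},\bfa^{\circ}})=0$ is false: in type $A_2$ with $\bfi=(1,2,1)$ and $\bfa^{\circ}=(1,0)$ one has $F_{\bfi^{\circ}}(\bfa^{\circ})=T_2^{-1}(f_1)$, whose class is $\widetilde{f}_2\widetilde{f}_1 b_0$, and $\varepsilon_1^{\ast}(\widetilde{f}_2\widetilde{f}_1 b_0)=1$; Kimura's compatibility does not give what you invoke it for. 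The input that is actually available, and that you need in order to reflect $b_{\bfi^{\circ},\bfa^{\circ}}$, is the following: since every $F(\beta_{\bfi,k})$ with $k\geq 2$ lies in $U_{q}^{-}(\mathfrak{g})$, the element $P:=T_{i_1}^{-1}(F_{\bfi^{\circ}}(\bfa^{\circ}))$ lies in $U_{q}^{-}(\mathfrak{g})\cap T_{i_1}^{-1}(U_{q}^{-}(\mathfrak{g}))$, so by Saito's Proposition 2.1.2 its class $b''$ satisfies $\varepsilon_{i_1}^{\ast}(b'')=0$ (it is this vanishing, for the reflected element and not for $b_{\bfi^{\circ},\bfa^{\circ}}$, that converts right multiplication by $f_{i_1}^{(a_1)}$ into $(\widetilde{f}_{i_1}^{\ast})^{a_1}$), and by Saito's Proposition 3.4.7 one gets $\mathcal{T}_{i_1}(b'')=b_{\bfi^{\circ},\bfa^{\circ}}$, i.e.\ $b''=\mathcal{T}_{i_1}^{-1}(b_{\bfi^{\circ},\bfa^{\circ}})$. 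With this repair your recursion $b_{\bfi,\bfa}=(\widetilde{f}_{i_1}^{\ast})^{a_1}\mathcal{T}_{i_1}^{-1}(b_{\bfi^{\circ},\bfa^{\circ}})$ is correct and the induction closes; on the geometric side you should still record, as in the proof of Lemma \ref{lem Z i' a'}, that a generic point of $Z_{\bfi}^{\bfa}$ has $i_1$-socle exactly $S_{i_1}^{a_1}$ (because $M_{\bfi,k}$ has trivial $i_1$-socle for $k\geq 2$), so that the component-level operator $(\widetilde{f}_{i_1}^{\ast})^{a_1}$ really produces $Z_{\bfi}^{\bfa}$. After these corrections your argument coincides, step for step, with the paper's proof of Theorem \ref{thm main}, from which the corollary follows at once.
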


\begin{rem}\label{rem importance of thm 1}
In the case that $\mathfrak{g}$ is finite-dimensional, $w=w_{0}$ the longest element in the Weyl group and $\bfi$ a $Q$-adapted reduced expression, the result in Theorem \ref{thm main} has been proved in \cite{BK} Proposition 7.8. In fact, in this case the set of modules $M_{\bfi,k}$ ($1\leq k\leq r$) is exactly a complete set of pairwise non-isomorphic indecomposable representations of $Q$. For any $\bfa\in\mathbb{N}^{r}$, set $M_{\bfi}^{\bfa}=\oplus_{k=1}^{r}M_{\bfi,k}^{a_{k}}\in\bmod(\mathbb{C}Q)$. It is not difficult to see that $\Lambda_{\bfi}^{\bfa}=T^{\ast}\mathcal{O}_{M_{\bfi}^{\bfa}}$, the conormal bundle of the orbit of $M_{\bfi}^{\bfa}$. So the irreducible component $Z_{\bfi}^{\bfa}$ is the same as the closure of $T^{\ast}\mathcal{O}_{M_{\bfi}^{\bfa}}$.

The arguments in \cite{BK} also work for any Kac-Moody algebra $\mathfrak{g}$ and $Q$-adapted reduced expression $\bfi$ (which means that $w$ is $Q$- adaptable). However, even in the case of finite type, there exists non-adaptable $w$ in the Weyl group (For example, consider type $D_{4}$ with $2$ being the central vertex and $w=s_{1}s_{2}s_{3}s_{2}$). Thus Theorem \ref{thm main} is new in call cases.
\end{rem}

\begin{rem}\label{rem importance of thm 2}
In general, for any $b\in\mathscr{B}(\infty)$, the image $\Psi(b)\in\mathcal{B}$ is not easy to describe. We can only use the fact that $\Psi$ commutes with $\widetilde{f}_{i}$ and keeps the unique highest weight element. More precisely, denote by $b_{0}$ (resp. $Z_{0}$) the unique highest weight element in $\mathscr{B}(\infty)$ (resp. $\mathcal{B}$). One need to find a path in the crystal graph from $b_{0}$ to $b$, say, $b=\widetilde{f}_{j_{1}}\widetilde{f}_{j_{2}}\cdots\widetilde{f}_{j_{s}}b_{0}$. Then we have $\Psi(b)=\widetilde{f}_{j_{1}}\widetilde{f}_{j_{2}}\cdots\widetilde{f}_{j_{s}}Z_{0}$.

We see that Theorem \ref{thm main} gives an explicit description of the image $\Psi(b)$ for any $b\in\mathscr{B}(w)$ once we know the $\bfi$-Lusztig data of $b$ for some $\bfi$.
\end{rem}

\subsection{Proof of theorem \ref{thm main}}\label{subsec 5 proof of thm}
First we recall some definitions. In \cite{Ka1, Ka2} it was proved that $\mathscr{B}(\infty)$ admits an involution $\ast$ induced by an algebra involution on $U_{q}(\mathfrak{g})$. And we have the $\ast$-Kashiwara operators $\widetilde{e}_{i}^{\ast}=\ast\circ\widetilde{e}_{i}\circ\ast$, $\widetilde{f}_{i}^{\ast}=\ast\circ\widetilde{f}_{i}\circ\ast$. For any $b\in\mathscr{B}(\infty)$, $\widetilde{e}_{i}^{\max}(b):=\widetilde{e}_{i}^{\varepsilon_{i}(b)}(b)$, $\widetilde{e}_{i}^{\ast\max}(b):=\widetilde{e}_{i}^{\ast\varepsilon_{i}^{\ast}(b)}(b)$. In \cite{Sai}, Saito introduced operators $\mathcal{T}_{i}$ and $\mathcal{T}_{i}^{-1}$ (originally denoted by $\Lambda_{i}$ and $\Lambda_{i}^{-1}$) on $\mathscr{B}(\infty)$ as follows
$$\mathcal{T}_{i}(b):=\widetilde{f}_{i}^{\ast\varphi_{i}(b)}\widetilde{e}_{i}^{\max}(b),\quad \mathcal{T}_{i}^{-1}(b):=\widetilde{f}_{i}^{\varphi_{i}^{\ast}(b)}\widetilde{e}_{i}^{\ast\max}(b).$$
They are analogues of Lusztig's automorphism $T_{i},T_{i}^{-1}$ at the crystal level.

Now we show how to deduce the $\bfi$-Lusztig data of any $b\in\mathscr{B}(w)$ by applying operators $\widetilde{e}_{i}^{\ast}$ and $\mathcal{T}_{i}$.

\begin{prop}\label{prop determine Lusztig data}
Let $b\in\mathscr{B}(w)$ and assume that $\psi_{\bfi}^{-1}(b)=\bfa=(a_{r},\ldots,a_{1})$. Then

(i). $a_{1}=\varepsilon_{i_{1}}^{\ast}(b)$. $\psi_{\bfi}^{-1}(\widetilde{e}_{i}^{\ast\max}b)=(a_{r},\ldots,a_{2},0)$.

(ii). Let $w'=s_{i_{r}}\cdots s_{i_{2}}$ and denote by $\bfi'=(i_{r},\ldots,i_{2})$, $\bfa'=(a_{r},\ldots,a_{2})$. Suppose that $a_{1}=0$, then $\mathcal{T}_{i_{1}}(b)\in\mathscr{B}(w')$ and we have $\psi_{\bfi'}^{-1}(\mathcal{T}_{i_{1}}(b))=\bfa'$.
\end{prop}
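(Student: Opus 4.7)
The plan is to reduce both assertions to an algebraic factorization of PBW monomials, then combine it with Kashiwara's decomposition of $U_q^-(\mathfrak{g})$ with respect to right multiplication by $f_{i_1}$ for part (i), and with Saito's compatibility between Lusztig's braid automorphism $T_{i_1}$ and his crystal operator $\mathcal{T}_{i_1}$ for part (ii). The key factorization is
$$F_{\bfi}(\bfa) \;=\; T_{i_1}^{-1}\bigl(F_{\bfi'}(\bfa')\bigr)\cdot f_{i_1}^{(a_1)},$$
which follows at once from the definitions, since $F(\beta_{\bfi,1}) = f_{i_1}$ and $F(\beta_{\bfi,k}) = T_{i_1}^{-1}\bigl(F(\beta_{\bfi',k-1})\bigr)$ for $k \geq 2$.

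For part (i), set $y := T_{i_1}^{-1}(F_{\bfi'}(\bfa'))$. Then $y \in T_{i_1}^{-1}\bigl(U_q^-(w')\bigr) \subset T_{i_1}^{-1}(U_q^-(\mathfrak{g})) \cap U_q^-(\mathfrak{g})$, and by a result of Lusztig (cf.\ \cite{Lu3}) this intersection coincides with the $\ast$-highest weight subspace $\{x \in U_q^-(\mathfrak{g}) : \widetilde{e}_{i_1}^{\ast} x \in q\mathscr{L}(\infty)\}$; consequently the crystal class of $y$ is killed by $\widetilde{e}_{i_1}^{\ast}$. Kashiwara's tensor-product-style decomposition
$$U_q^-(\mathfrak{g}) = \bigoplus_{n \geq 0} \bigl\{x : \widetilde{e}_{i_1}^{\ast} x \in q\mathscr{L}(\infty)\bigr\}\cdot f_{i_1}^{(n)}$$
then yields, modulo $q\mathscr{L}(\infty)$, both $\varepsilon_{i_1}^{\ast}(b_{\bfi,\bfa}) = a_1$ and $\widetilde{e}_{i_1}^{\ast\max}(b_{\bfi,\bfa}) = b_{\bfi,(a_r,\ldots,a_2,0)}$, the latter being by definition the crystal class of $y$.

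For part (ii), the assumption $a_1 = 0$ specialises the factorization to $F_{\bfi}(\bfa) = T_{i_1}^{-1}(F_{\bfi'}(\bfa'))$, and part (i) has already shown $\varepsilon_{i_1}^{\ast}(b) = 0$. Saito's compatibility theorem from \cite{Sai} asserts that on the subset $\{b \in \mathscr{B}(\infty) : \varepsilon_{i_1}^{\ast}(b) = 0\}$ the crystal operator $\mathcal{T}_{i_1}$ is induced by Lusztig's automorphism $T_{i_1}$ modulo $q\mathscr{L}(\infty)$. Applying $T_{i_1}$ to the identity above recovers $F_{\bfi'}(\bfa')$, whose crystal class is by definition $b_{\bfi',\bfa'} \in \mathscr{B}(w')$; hence $\mathcal{T}_{i_1}(b) = b_{\bfi',\bfa'}$, which gives both $\mathcal{T}_{i_1}(b) \in \mathscr{B}(w')$ and $\psi_{\bfi'}^{-1}(\mathcal{T}_{i_1}(b)) = \bfa'$.

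The principal obstacle is the precise invocation of Saito's compatibility theorem in the correct direction. Saito phrases the correspondence between $T_{i_1}^{\pm 1}$ and $\mathcal{T}_{i_1}^{\pm 1}$ in terms of the $\varepsilon_{i_1} = 0$ subset, whereas our argument requires the $\ast$-dual version on the $\varepsilon_{i_1}^{\ast} = 0$ subset; one must verify that the two statements are equivalent under the Kashiwara involution $\ast$ on $\mathscr{B}(\infty)$ and check that the unavoidable $\pm q^n$ discrepancy between $T_{i_1}$ on canonical basis elements and the corresponding crystal permutation indeed disappears on passing to $\mathscr{L}(\infty)/q\mathscr{L}(\infty)$. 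Once this identification is pinned down, the remainder of the argument is the purely formal manipulation above.
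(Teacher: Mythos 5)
Your proposal is correct and follows essentially the same route as the paper: the factorization $F_{\bfi}(\bfa)=T_{i_1}^{-1}(F_{\bfi'}(\bfa'))\,f_{i_1}^{(a_1)}$, the characterization of $T_{i_1}^{-1}(U_q^-(\mathfrak{g}))\cap U_q^-(\mathfrak{g})$ together with Kashiwara's right-multiplication decomposition for (i), and Saito's compatibility of $T_{i_1}$ with $\mathcal{T}_{i_1}$ for (ii) are exactly the steps the paper takes, citing Saito's Propositions 2.1.2 and 3.4.7. The only caveat is that the intersection equals the honest kernel of the starred derivation (so elements there satisfy $\widetilde{e}_{i_1}^{\ast}$-highest weight exactly, not merely modulo $q\mathscr{L}(\infty)$), and the convention check you flag is resolved in the paper simply by the remark that Saito's Proposition 3.4.7 applies to any $P\in\mathscr{L}(\infty)$ with $P\equiv b \bmod q\mathscr{L}(\infty)$, not only to $G(b)$.
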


\begin{proof}
We know that
\begin{equation*}
b\equiv F_{\bfi}(\bfa)=T_{i_{1}}^{-1}\cdots T_{i_{r-1}}^{-1}(f_{i_{r}}^{(a_{r})})\cdots T_{i_{1}}^{-1}(f_{i_{2}}^{(a_{2})})f_{i_{1}}^{(a_{1})}, \mod q\mathscr{L}(\infty).
\end{equation*}

Write $P=T_{i_{1}}^{-1}\cdots T_{i_{r-1}}^{-1}(f_{i_{r}}^{(a_{r})})\cdots T_{i_{1}}^{-1}(f_{i_{2}}^{(a_{2})})$. So $F_{\bfi}(\bfa)=Pf_{i_{1}}^{(a_{1})}$.

Note that $P\in T_{i_{1}}^{-1}(U_{q}^{-}(\mathfrak{g}))\cap U_{q}^{-}(\mathfrak{g})$, by \cite{Sai} Proposition 2.1.2 and the definition of $\widetilde{e}_{i}$, we have $\widetilde{e}_{i}^{\ast}(P)=0$ and $F_{\bfi}(\bfa)=\widetilde{f}_{i}^{\ast a_{1}}(P)$. Hence $\widetilde{e}_{i}^{\ast\max}F_{\bfi}(\bfa)=P\equiv \widetilde{e}_{i}^{\ast\max}(b) \mod q\mathscr{L}(\infty)$. This proves (i).

Now assume $a_{1}=0$, we have $b\equiv F_{\bfi}(\bfa)=P\in T_{i_{1}}^{-1}(U_{q}^{-}(\mathfrak{g}))\cap U_{q}^{-}(\mathfrak{g})$. So
$$T_{i_{1}}(P)= T_{i_{2}}^{-1}\cdots T_{i_{r-1}}^{-1}(f_{i_{r}}^{(a_{r})})\cdots T_{i_{2}}^{-1}(f_{i_{3}}^{(a_{3})})f_{i_{2}}^{(a_{2})}=F_{\bfi'}(\bfa').$$

By \cite{Sai} Proposition 3.4.7, $\mathcal{T}_{i_{1}}(b)\equiv T_{i_{1}}(P)\mod q\mathscr{L}(\infty)$, which yields (ii).
\end{proof}

\begin{rem}
In \cite{Sai} Proposition 3.4.7 it was assumed that $P=G(b)$ the canonical basis element corresponding to $b$. However, for the proof there one only needs $P\equiv b\mod q\mathscr{L}(\infty)$ and $P\in\mathscr{L}(\infty)$.
\end{rem}

Using the above proposition we have $\mathcal{T}_{i_{1}}\widetilde{e}_{i_{1}}^{\ast\max}(b_{\bfi,\bfa})=b_{\bfi',\bfa'}$. We can repeat the procedure and finally we will reach the unique highest weight element $b_{0}$. This gives the following corollary.

\begin{cor}\label{cor reach b from the hwc}
$\widetilde{e}_{i_{r}}^{\ast\max}\mathcal{T}_{i_{r-1}}\widetilde{e}_{i_{r-1}}^{\ast\max}\cdots\mathcal{T}_{i_{1}}\widetilde{e}_{i_{1}}^{\ast\max}
(b_{\bfi,\bfa})=b_{0}$.
\end{cor}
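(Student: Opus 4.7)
The plan is a straightforward induction on $r=\ell(w)$ using Proposition \ref{prop determine Lusztig data} as the inductive engine. The two parts of that proposition are perfectly matched to this task: part (i) takes $b_{\bfi,\bfa}$ and strips its last Lusztig coordinate $a_{1}$ to $0$ via $\widetilde{e}_{i_{1}}^{\ast\max}$, and part (ii) then says that when the last coordinate is already $0$, the operator $\mathcal{T}_{i_{1}}$ carries the element into $\mathscr{B}(w')$ where $w'=s_{i_{r}}\cdots s_{i_{2}}$, and rewrites its Lusztig data as $\bfa'=(a_{r},\ldots,a_{2})$ with respect to the truncated reduced expression $\bfi'=(i_{r},\ldots,i_{2})$.

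More concretely, the base case $r=0$ is immediate since $\bfi$ is empty, $\bfa$ is empty, and $b_{\bfi,\bfa}=b_{0}$ by convention (as $F_{\bfi}(\bfa)=1$). For the inductive step, starting from $b_{\bfi,\bfa}$ we first apply Proposition \ref{prop determine Lusztig data}(i) to obtain $\widetilde{e}_{i_{1}}^{\ast\max}(b_{\bfi,\bfa})$ with $\bfi$-Lusztig data $(a_{r},\ldots,a_{2},0)$; we then apply Proposition \ref{prop determine Lusztig data}(ii), using the crucial hypothesis that this last coordinate vanishes, to conclude
$$\mathcal{T}_{i_{1}}\widetilde{e}_{i_{1}}^{\ast\max}(b_{\bfi,\bfa})=b_{\bfi',\bfa'}\in\mathscr{B}(w').$$
The inductive hypothesis applied to $\bfi'$ (of length $r-1$) and $\bfa'$ then gives
$$\widetilde{e}_{i_{r}}^{\ast\max}\mathcal{T}_{i_{r-1}}\widetilde{e}_{i_{r-1}}^{\ast\max}\cdots\mathcal{T}_{i_{2}}\widetilde{e}_{i_{2}}^{\ast\max}(b_{\bfi',\bfa'})=b_{0},$$
and concatenating with the two operators from the current step yields the claim.

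I do not foresee a genuine obstacle: the whole argument is a bookkeeping exercise showing that the alternating string of operators is exactly calibrated to peel off one coordinate of the Lusztig datum per round. The one point that deserves attention is verifying that the hypothesis of Proposition \ref{prop determine Lusztig data}(ii) is in force at each stage — that is, that after each $\widetilde{e}_{i_{k}}^{\ast\max}$-step the last entry of the current Lusztig data is $0$ — but this is precisely what part (i) guarantees, so the two parts dovetail. The induction then terminates at $r=0$ with $b_{0}$.
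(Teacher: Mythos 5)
Your proof is correct and is essentially the paper's own argument: the paper simply applies Proposition \ref{prop determine Lusztig data}(i) and (ii) repeatedly (``repeat the procedure'') until reaching $b_{0}$, which is exactly your induction. Only a trivial bookkeeping adjustment is needed: since the stated operator string ends with $\widetilde{e}_{i_{r}}^{\ast\max}$ and contains no $\mathcal{T}_{i_{r}}$, the base case should be $r=1$, handled by part (i) alone together with the observation that the Lusztig datum $(0)$ corresponds to $b_{0}$, rather than $r=0$ (otherwise the $r=1$ instance of your inductive step produces an extra $\mathcal{T}_{i_{1}}$ not present in the claimed formula).
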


Now we are ready to prove the main theorem. As in Proposition \ref{prop determine Lusztig data} we denote $\bfi'=(i_{r},\ldots,i_{2})$ and $\bfa'=(a_{r},\ldots,a_{2})$.

\begin{lem}\label{lem Sigma star X}
For any $X\in\Lambda_{\bfi}^{\bfa}$, $\Sigma_{i_{1}}^{\ast}X\in\Lambda_{\bfi'}^{\bfa'}$.
\end{lem}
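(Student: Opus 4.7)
My plan is to exploit the fact that under $\Sigma_{i_1}^\ast$, the bottom piece $M_{\bfi,1}^{a_1}=S_{i_1}^{a_1}$ of the filtration of $X$ is killed, while the remaining factors $M_{\bfi,k}^{a_k}$ (for $k\geq 2$) are transported to $M_{\bfi',k}^{a_k}$. The starting point is Proposition \ref{prop M_k and replection funtctor}: for $k\geq 2$ we have
$$M_{\bfi,k}=\Sigma_{i_1}\Sigma_{i_2}\cdots\Sigma_{i_{k-1}}S_{i_k}=\Sigma_{i_1}M_{\bfi',k},$$
so $M_{\bfi,k}\in\bmod(\Lambda)[i_1]^{\ast}$ (trivial $i_1$-socle) and, by Corollary \ref{cor trivial top}, $M_{\bfi',k}\in\bmod(\Lambda)[i_1]$ (trivial $i_1$-top). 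The equivalence $\Sigma_{i_1}:\bmod(\Lambda)[i_1]\rightleftarrows\bmod(\Lambda)[i_1]^{\ast}:\Sigma_{i_1}^{\ast}$ from Lemma \ref{lem prop of ref funt} then gives $\Sigma_{i_1}^{\ast}M_{\bfi,k}=M_{\bfi',k}$ for $k\geq 2$, while the same equivalence argument applied to $\Sigma_{i_1}(\Sigma_{i_1}^{\ast}S_{i_1})=S_{i_1}/\soc_{i_1}S_{i_1}=0$ with $\Sigma_{i_1}^{\ast}S_{i_1}\in\bmod(\Lambda)[i_1]$ yields $\Sigma_{i_1}^{\ast}S_{i_1}=0$.

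Next I would pin down $\soc_{i_1}X$. Given $X\in\Lambda_{\bfi}^{\bfa}$ with filtration $0=X_0\subseteq X_1\subseteq\cdots\subseteq X_r=X$ and $X_k/X_{k-1}\cong M_{\bfi,k}^{a_k}$, the quotient $X/X_1$ is filtered by modules all lying in $\bmod(\Lambda)[i_1]^{\ast}$, hence itself lies in $\bmod(\Lambda)[i_1]^{\ast}$ (an extension of modules with trivial $i_1$-socle again has trivial $i_1$-socle). Since $X_1\cong S_{i_1}^{a_1}$ is obviously inside $\soc_{i_1}X$, this forces $\soc_{i_1}X=X_1$.

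Now I would apply the right exact functor $\Sigma_{i_1}^{\ast}$ to the short exact sequence $0\to X_1\to X\to X/X_1\to 0$. Since $\Sigma_{i_1}^{\ast}X_1=\Sigma_{i_1}^{\ast}(S_{i_1}^{a_1})=0$, right exactness gives $\Sigma_{i_1}^{\ast}X\cong\Sigma_{i_1}^{\ast}(X/X_1)$. Then, noting that every submodule in the filtration $0\subseteq X_2/X_1\subseteq X_3/X_1\subseteq\cdots\subseteq X_r/X_1=X/X_1$ is a submodule of $X/X_1\in\bmod(\Lambda)[i_1]^{\ast}$ and so also lies in $\bmod(\Lambda)[i_1]^{\ast}$, I can apply Lemma \ref{lem restrict exactness} successively to the short exact sequences $0\to X_{k-1}/X_1\to X_k/X_1\to M_{\bfi,k}^{a_k}\to 0$. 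The outcome is a filtration of $\Sigma_{i_1}^{\ast}X$ with successive quotients $\Sigma_{i_1}^{\ast}M_{\bfi,k}^{a_k}=M_{\bfi',k}^{a_k}$ for $k=2,\ldots,r$, which is exactly what it means for $\Sigma_{i_1}^{\ast}X$ to belong to $\Lambda_{\bfi'}^{\bfa'}$.

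The main technical obstacle, I expect, is the bookkeeping around the failure of $\Sigma_{i_1}^{\ast}$ to be left exact: one must carefully justify that $\Sigma_{i_1}^{\ast}$ sends the whole filtration of $X/X_1$ to a filtration of $\Sigma_{i_1}^{\ast}(X/X_1)$ with the expected subquotients rather than merely showing the top and bottom behave well. The cleanest way around this is the restricted exactness of Lemma \ref{lem restrict exactness}, which applies precisely because the filtration of $X/X_1$ stays inside $\bmod(\Lambda)[i_1]^{\ast}$; once that is observed, the rest of the argument is largely formal.
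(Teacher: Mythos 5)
Your proposal is correct and follows essentially the same route as the paper: kill $X_{1}\cong S_{i_{1}}^{a_{1}}$ using right exactness of $\Sigma_{i_{1}}^{\ast}$, transport the induced filtration of $X/X_{1}$ via the restricted exactness of Lemma \ref{lem restrict exactness} (all higher subquotients having trivial $i_{1}$-socle by Proposition \ref{prop M_k and replection funtctor}), and identify $\Sigma_{i_{1}}^{\ast}M_{\bfi,k}\cong M_{\bfi',k}$ using Corollary \ref{cor trivial top} and Lemma \ref{lem prop of ref funt}. The only differences are cosmetic: your computation of $\soc_{i_{1}}X=X_{1}$ is not needed for this lemma (the paper uses it only in the next step), and $\Sigma_{i_{1}}^{\ast}S_{i_{1}}=0$ is a standard fact the paper simply quotes.
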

\begin{proof}
By definition $X$ has a filtration
\begin{equation}\label{equ 4}
0=X_{0}\subseteq X_{1}\subseteq \cdots \subseteq X_{r}=X
\end{equation}
such that $X_{k}/X_{k-1}\cong M_{\bfi,k}^{a_{k}}$ for any $1\leq k\leq r$. Denote by $Y=X/X_{1}$. We have the following exact sequence
$$0\rightarrow X_{1}\rightarrow X\rightarrow Y\rightarrow0.$$

Applying $\Sigma_{i_{1}}^{\ast}$ to the above sequence, we have $\Sigma_{i_{1}}^{\ast}X\cong\Sigma_{i_{1}}^{\ast}Y$ because $X_{1}\cong M_{\bfi,1}^{a_{1}}=S_{i_{1}}^{a_{1}}$, $\Sigma_{i_{1}}^{\ast}S_{i_{1}}=0$ and the functor $\Sigma_{i_{1}}^{\ast}$ is right exact.

The filtration (\ref{equ 4}) induces the following filtration of $Y$:
$$0=Y_{1}\subseteq Y_{2}\subseteq \cdots \subseteq Y_{r}=Y,$$
where $Y_{k}\cong X_{k}/X_{1}$ for any $2\leq k\leq r$. In particular, $Y_{k}/Y_{k-1}\cong X_{k}/X_{k-1}\cong M_{\bfi,k}^{a_{k}}$.

By Proposition \ref{prop M_k and replection funtctor}, $M_{\bfi,k}=\Sigma_{i_{1}}\cdots \Sigma_{i_{k-1}}S_{i_{k}}$ for any $1\leq k\leq r$. So for $2\leq k\leq r$, $M_{\bfi,k}$ has trivial $i_{1}$-socle. Then by Lemma \ref{lem restrict exactness}, we know that $\Sigma_{i_{1}}^{\ast}Y$ has a filtration
$$0=Y'_{1}\subseteq Y'_{2}\subseteq \cdots \subseteq Y'_{r}=Y$$
such that $Y'_{k}/Y'_{k-1}\cong (\Sigma_{i_{1}}^{\ast}M_{\bfi,k})^{a_{k}}$.

Note that $\Sigma_{i_{2}}\cdots\Sigma_{i_{k-1}}S_{i_{k}}$ has trivial $i_{1}$-top (Corollary \ref{cor trivial top}). By Lemma \ref{lem prop of ref funt} (ii), we deduce that
$$\Sigma_{i_{1}}^{\ast}M_{\bfi,k}=\Sigma_{i_{1}}^{\ast}\Sigma_{i_{1}}\Sigma_{i_{2}}\cdots\Sigma_{i_{k-1}}S_{i_{k}}\cong
\Sigma_{i_{2}}\cdots\Sigma_{i_{k-1}}S_{i_{k}}.$$

This proves $\Sigma_{i_{1}}^{\ast}Y\in\Lambda_{\bfi'}^{\bfa'}$.
\end{proof}

\begin{lem}\label{lem Z i' a'}
$\mathcal{T}_{i_{1}}\widetilde{e}_{i_{1}}^{\ast\max}(Z_{\bfi}^{\bfa})=Z_{\bfi'}^{\bfa'}$.
\end{lem}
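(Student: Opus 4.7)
The plan is to split the claim into two geometric computations that mirror the two operators on the left-hand side, and to use Lemma \ref{lem Sigma star X} plus a Baumann-Kamnitzer reflection-functor realization of Saito's operator as the connective tissue.

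First I would identify $\widetilde{e}_{i_1}^{*\max}(Z_\bfi^\bfa)$ with $Z_\bfi^{(a_r,\ldots,a_2,0)}$. The key observation is that for every $X\in\Lambda_\bfi^\bfa$, $\soc_{i_1}X=X_1\cong S_{i_1}^{a_1}$. The inclusion $X_1\subseteq\soc_{i_1}X$ is automatic because $X_1$ is a semisimple submodule sitting at the bottom of the filtration. For the reverse inclusion, Proposition \ref{prop M_k and replection funtctor} combined with Corollary \ref{cor trivial top} writes $M_{\bfi,k}=\Sigma_{i_1}N_k$ for each $k\geq 2$ with $N_k$ having trivial $i_1$-top (including the base case $k=2$, where $N_2=S_{i_2}$). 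Applying Lemma \ref{lem prop of ref funt}(ii) then yields $\Sigma_{i_1}^*\Sigma_{i_1}N_k=N_k$ and $\Sigma_{i_1}\Sigma_{i_1}^*(\Sigma_{i_1}N_k)=\Sigma_{i_1}N_k$, forcing $\soc_{i_1}M_{\bfi,k}=0$ for $k\geq 2$. An inspection of the induced filtration on $X/X_1$ (where any copy of $S_{i_1}$ would have to inject into some subquotient $M_{\bfi,k}^{a_k}$ with $k\geq 2$) then rules out nontrivial $i_1$-socle. By Kashiwara-Saito's geometric recipe for $\widetilde{e}_{i_1}^{*\max}$, the resulting component is the one containing the generic module $X/X_1$, and since $X/X_1$ inherits the filtration with factors $M_{\bfi,k}^{a_k}$ for $k\geq 2$, it lies in $\Lambda_\bfi^{(a_r,\ldots,a_2,0)}$, as required.

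Next I would invoke Baumann-Kamnitzer's geometric realization of Saito's operator: on any component $W\in\mathcal{B}$ with $\varepsilon_{i_1}^*(W)=0$, the component $\mathcal{T}_{i_1}(W)$ is the Zariski closure of $\{\Sigma_{i_1}^*Y:Y\in W\text{ generic}\}$. Since $W=Z_\bfi^{(a_r,\ldots,a_2,0)}$ satisfies $\varepsilon_{i_1}^*(W)=0$ by the previous step, and Lemma \ref{lem Sigma star X} places $\Sigma_{i_1}^*Y$ inside $\Lambda_{\bfi'}^{\bfa'}$, we conclude $\mathcal{T}_{i_1}(W)=Z_{\bfi'}^{\bfa'}$, completing the argument. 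The hard part is precisely the invocation of the Baumann-Kamnitzer matching of $\mathcal{T}_{i_1}$ with $\Sigma_{i_1}^*$: as a black box this step is a one-line citation, but deriving it from scratch would require unpacking $\mathcal{T}_{i_1}=\widetilde{f}_{i_1}^{*\varphi_{i_1}}\widetilde{e}_{i_1}^{\max}$ via the Kashiwara-Saito geometric formulas and tracking the iterated top-removal and socle-extension operations through the functorial exact sequences in Lemma \ref{lem prop of ref funt}(ii).
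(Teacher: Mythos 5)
Your proof is correct and takes essentially the same route as the paper: both rest on the computation that every $X\in\Lambda_{\bfi}^{\bfa}$ has $\soc_{i_{1}}X\cong S_{i_{1}}^{a_{1}}$ (so $\varepsilon_{i_{1}}^{\ast}(Z_{\bfi}^{\bfa})=a_{1}$), on Lemma \ref{lem Sigma star X}, and on the Baumann--Kamnitzer realization of the Saito reflection by $\Sigma_{i_{1}}^{\ast}$ on generic points. The only difference is presentational: you factor the operation into $\widetilde{e}_{i_{1}}^{\ast\max}$ (quotient by the generic $i_{1}$-socle) followed by $\mathcal{T}_{i_{1}}$ on a component with $\varepsilon_{i_{1}}^{\ast}=0$, whereas the paper cites the dual of Proposition 5.5 of \cite{BK} once for the composite $\mathcal{T}_{i_{1}}\widetilde{e}_{i_{1}}^{\ast\max}$ --- precisely the black box you defer at the end.
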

\begin{proof}
For any $X\in\Lambda_{\bfi}^{\bfa}$, since $M_{\bfi,k}\simeq\Sigma_{i_{1}}\cdots\Sigma_{i_{k-1}}S_{i_{k}}$ has trivial $i_{1}$-socle for all $2\leq k\leq r$, we have
$$\soc_{i_{1}}X\cong \soc_{i_{1}}M_{\bfi,i_{1}}=S_{i_{1}}^{a_{1}}.$$

We know that the irreducible component $Z_{\bfi}^{\bfa}$ contains a dense open subset belonging to $\Lambda_{\bfi}^{\bfa}$, thus we have  $\varepsilon_{i_{1}}^{\ast}(Z_{\bfi}^{\bfa})=a_{1}$.

Now by the dual of Proposition 5.5 in \cite{BK}, there exists a dense open subset $U$ of $Z_{\bfi}^{\bfa}$ such that $U\subseteq\Lambda_{\bfi}^{\bfa}$ and for any $X\in U$, $\Sigma_{i_{1}}^{\ast}X$ lies in a dense open subset of $\mathcal{T}_{i_{1}}\widetilde{e}_{i_{1}}^{\ast\max}(Z_{\bfi}^{\bfa})$.

So Lemma \ref{lem Sigma star X} implies $\mathcal{T}_{i_{1}}\widetilde{e}_{i_{1}}^{\ast\max}(Z_{\bfi}^{\bfa})=Z_{\bfi'}^{\bfa'}$.
\end{proof}

We can apply Lemma \ref{lem Z i' a'} successively to $Z_{\bfi}^{\bfa}$ until we reach the unique irreducible component of the variety $\Lambda(0)$ (in fact it is a point), which is the highest weight element $Z_{0}$ in $\mathcal{B}$. Namely we have
\begin{equation}\label{equ 5}
\widetilde{e}_{i_{r}}^{\ast\max}\mathcal{T}_{i_{r-1}}\widetilde{e}_{i_{r-1}}^{\ast\max}\cdots
\mathcal{T}_{i_{1}}\widetilde{e}_{i_{1}}^{\ast\max}Z_{\bfi}^{\bfa}=Z_{0}.
\end{equation}

Now note that the crystal isomorphism $\Psi:\mathscr{B}(\infty)\simeq\mathcal{B}$ maps $b_{0}$ to $Z_{0}$ and commutes with $\widetilde{e}_{i}^{\ast}$ and $\mathcal{T}_{i}$. Comparing (\ref{equ 5}) with Corollary \ref{cor reach b from the hwc}, we complete the proof of Theorem \ref{thm main}.

\bigskip
\par\noindent {\bf Acknowledgments.}
Part of this work was done when the author was visiting the Hausdorff Research Institute for Mathematics (HIM) in Bonn as a participant of the trimester program "On the interaction of representation theory with geometry and combinatorics". The author would like to thank the organizers for their hospitality. He is grateful to Prof. Jan Schr\"{o}er for interesting discussions and comments. He is also very grateful to the anonymous
referee for pointing out two errors in an earlier draft and his/her valuable suggestions.

\bibliographystyle{amsplain}

\end{document}